\newcolumntype{L}{>{$}l<{$}} 
\renewcommand{\@biblabel}[1]{[#1]\hfill}
\numberwithin{equation}{section}
\newtheorem{theorem}[equation]{Theorem}
\newtheorem{lemma}[equation]{Lemma}
\newtheorem{proposition}[equation]{Proposition}
\newtheorem{conj}[equation]{Conjecture}
\theoremstyle{definition}
\newtheorem{definition}[equation]{Definition}
\newtheorem{remark}[equation]{Remark}
\newtheorem{example}[equation]{Example}
\newtheorem{algorithm}[equation]{Algorithm}
\newcommand{\eps}{\varepsilon}
\newcommand{\OO}{\mathcal{O}}
\newcommand{\CC}{\mathbb{C}}
\newcommand{\RR}{\mathbb{R}}
\newcommand{\QQ}{\mathbb{Q}}
\newcommand{\ZZ}{\mathbb{Z}}
\newcommand{\NN}{\mathbb{N}}
\newcommand{\FF}{\mathbb{F}}
\DeclareMathOperator{\Tr}{Tr}
\DeclareMathOperator{\Res}{Res}
\DeclareMathOperator{\ord}{ord}
\DeclareMathOperator{\Gal}{Gal}
\DeclareMathOperator{\GL}{GL}
\DeclareMathOperator{\Hom}{Hom}
\DeclareMathOperator{\Frob}{Frob}
\DeclareMathOperator{\Ind}{Ind}
\DeclareMathOperator{\End}{End}
\DeclareMathOperator{\Jac}{Jac}
\DeclareMathOperator{\Fil}{Fil}
\DeclareMathOperator{\dR}{dR}
\DeclareMathOperator{\disc}{disc}
\DeclareMathOperator{\one}{\mathbf{1}}
\DeclareFontFamily{U}{wncy}{}
\DeclareFontShape{U}{wncy}{m}{n}{<->wncyr10}{}
\DeclareSymbolFont{mcy}{U}{wncy}{m}{n}
\DeclareMathSymbol{\Sha}{\mathord}{mcy}{"58}
\newcommand{\pp}{\mathfrak{p}}
\newcommand{\PP}{\mathfrak{P}}
\newcommand{\trianglerightneq}{\mathrel{\ooalign{\raisebox{-0.5ex}{\reflectbox{\rotatebox{90}{$\nshortmid$}}}\cr$\triangleright$\cr}\mkern-3mu}}
\newcommand{\triangleleftneq}{\hspace{0.01cm}\mathrel{\reflectbox{$\trianglerightneq$}}\hspace{0.05cm}}
\newcommand{\midx}{\mid}
\renewcommand{\bar}{\overline}
\renewcommand{\tilde}{\widetilde}
\title[Motivic pieces of curves]{Motivic pieces of curves: $L$-functions and periods}
\author{Harry Spencer}
\address{University College, London, WC1H 0AY, UK}
\email{harry.spencer.22@ucl.ac.uk}
\begin{document}

\begin{abstract}
    Given a curve $C$ over a number field $K$ equipped with the action of a finite group $G$ by $K$-automorphisms, one obtains a factorisation of $L(C,s)$ into a product of $L$-functions of `motivic pieces of curves' associated to irreducible $G$-representations. We describe an algorithm for explicitly computing values of these $L$-functions, demonstrating implementations in the cases of certain curves with actions by $C_3$, $C_4$ and $D_{10}$. We explain how this algorithm can be used to factor $L$-functions of curves with endomorphisms of Hecke type. 
    
    Towards applications, we explicitly formulate and numerically verify a version of Deligne's Period Conjecture for hitherto-uninvestigated $L$-functions arising from motivic pieces of superelliptic curves.
\end{abstract}

\maketitle

\tableofcontents

\section{Introduction}\label{sec:intro}
Given a curve $C$ over a number field $K$, one associates to it an $L$-function
    \[ L(C,s) = \prod_{(0)\ne\pp\triangleleftneq\OO_K}L_\pp(C,N(\pp)^{-s})^{-1},\]
where $L_\pp(C,T)=\det(1-\Frob_\pp^{-1} \cdot T \mid (T_\ell\Jac(C)\otimes_{\ZZ_\ell}\bar{\QQ}_\ell)^{*,I_\pp})$ for any $\ell\nmid\pp$ and these determinants are computed in practice by counting points on the special fibre of a regular model of $C$ at each prime of $\OO_K$. \textit{A priori}, this is defined only for $\Re(s)>3/2$, but the Hasse--Weil Conjecture predicts that $L(C,s)$ admits analytic continuation to all of $\CC$ and, famously, the Birch--Swinnerton-Dyer Conjecture (BSD) describes the order of vanishing and leading term of the Taylor expansion at $s=1$ in terms of arithmetic invariants.

If, further, $C$ admits an action by $K$-automorphisms of a finite group $G$, then $T_\ell\Jac(C)\otimes_{\ZZ_\ell}\bar{\QQ}_\ell$ decomposes as a direct sum of irreducible $G$-representations. There is a corresponding factorisation---\textit{a priori} for $\Re(s)>3/2$---on the level of $L$-functions:
\begin{equation}
    L(C,s)= \prod_{\tau \text{ irred.}} L(C^\tau,s)^{\dim \tau},
\end{equation}
where $L(C^\tau,s)$ is the $L$-function associated to a `motivic piece', as described in \cite[\S2]{DGKM2} (cf.\ Definition \ref{defn:motivic_piece}). These will be the central objects of our study.

The main achievement of the present work is Algorithm \ref{algo}, an algorithm for computing the Dirichlet series coefficients of $L(C^\tau,s)$. The primary novelty of this is the reduction of the computation of Euler factors to point counts on curves---analogously to the standard algorithm for $L(C,s)$. We implement this algorithm in several cases using the Magma \cite{Magma} adaptation of T. Dokchitser's $L$-function calculator \cite{DokCalc}. There were several motivations for the development of this algorithm, including: explicitly recovering cases where this decomposition matches with known factorisations of $L$-functions, improving $L$-function computations efficiency for curves with extra automorphisms (or endomorphisms), and numerical testing of many conjectures.

On this latter point: formally, the motivic piece $C^\tau$ is a motive over $K$ with coefficients in $\QQ(\tau)$; it is the motive associated to the first cohomology of $C$ and the $\tau$-idempotent of $\CC[G]$. For the purposes of this work, we eschew any categorical discussion of motives---instead adopting the viewpoint of Deligne (cf.\ \cite[(0.12)]{Deligne79}), from which we see motives as a collection of cohomologies $H_B, H_{\dR}, (H_{\ell})_\ell$ with additional data, including comparison isomorphisms between them (see \S\ref{sec:motives}). 

The arithmetic of $L$-functions arising from motives is widely believed to be governed by a vast conjectural edifice which generalises BSD---for example, we are in the precise setting of the equivariant Tamagawa Number Conjecture of Burns and Flach \cite[\S4]{BurnsFlach}, after Bloch and Kato, for the pair $(h^1(C),\CC[G])$. We do not pursue verification of this conjecture herein, instead working on the lower levels of this edifice. At this level, several standard conjectures are interpreted explicitly for the $L$-functions $L(C^\tau,s)$ in \cite{DGKM2} (see [\textit{loc.\ cit.}, Conjectures 2.20 \& 2.23, Theorem 2.24]). See \S\ref{sec:motivic_pieces} for explanations of the terms in \eqref{eqn:func}.

\begin{conj}\label{conjs}
    The $L$-functions $L(C^\tau,s)$ satisfy the following properties:
    \begin{itemize}
        \item[\emph{\textbf{Hasse--Weil:}}] $L(C^\tau,s)$ admits meromorphic continuation to the complex plane and the completed $L$-function $\Lambda(C^\tau,s)$ satisfies the functional equation
        \begin{equation}\label{eqn:func} \Lambda(C^\tau,s)=w(C^\tau)\cdot \Lambda(C^{\tau^*},2-s).\end{equation}
        \item[\emph{\textbf{Deligne:}}] For $\sigma\in\Gal(\QQ(\tau)/\QQ)$, we have $\ord_{s=1} L(C^\tau,s)=\ord_{s=1} L(C^{\sigma(\tau)},s)$.
        \item[\emph{\textbf{Weak BSD:}}] $\ord_{s=1}L(C^\tau,s)=\langle\tau,\Jac(C)(K)\otimes_\ZZ\CC\rangle$.
        \item[\emph{\textbf{Parity:}}] For $\tau$ self-dual, $w(C^\tau)=(-1)^{\langle\tau,\Jac(C)(K)\otimes_\ZZ\CC\rangle}$.
    \end{itemize}
\end{conj}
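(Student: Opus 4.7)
The plan is to address the four items of Conjecture \ref{conjs} separately, bearing in mind that each is genuinely open for arbitrary $C^\tau$ (the general Hasse--Weil conjecture for curves is not known), so the sketch below describes reductions one would attempt rather than a uniform unconditional argument.

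For \textbf{Hasse--Weil}, I would aim to identify $L(C^\tau,s)$, up to a finite set of Euler factors, with an $L$-function already known to admit meromorphic continuation and satisfy a functional equation. The most promising pathway is to exhibit the motive $C^\tau$ inside $h^1(A)$ for an abelian variety $A$ isogenous to a factor of $\Jac(C)$, and then to use endomorphisms of $A$ induced by elements of $\CC[G]$ to factor the $L$-function through Hecke $L$-functions or $L$-functions of newforms. For the superelliptic families targeted later in the paper, $\Jac(C)$ typically has CM by an abelian extension, so this strategy is expected to produce Hecke Grössencharacter $L$-functions, for which analytic continuation and a functional equation of the shape \eqref{eqn:func} are classical.

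For \textbf{Deligne} I would argue purely formally from the definition. The Galois conjugate piece $C^{\sigma(\tau)}$ is obtained by applying $\sigma$ to the primitive idempotent $e_\tau \in \CC[G]$, so the Euler factors $L_\pp(C^{\sigma(\tau)},T)$ are the $\sigma$-images of $L_\pp(C^\tau,T)$. It then follows that the Dirichlet series for $L(C^{\sigma(\tau)},s)$ has coefficients obtained by applying $\sigma$ to those of $L(C^\tau,s)$; since $\sigma$ fixes the rational point $s=1$, the orders of vanishing there must agree. This reduces to pinning down the Wedderburn decomposition of $\CC[G]$ indexed by $\Gal(\QQ(\tau)/\QQ)$-orbits and checking that $\sigma$ acts on Frobenius traces in the expected way.

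For \textbf{Weak BSD} and \textbf{Parity} I would mirror the strategy familiar from elliptic curves. Weak BSD is the $\tau$-isotypic component of $\ord_{s=1}L(\Jac(C),s)=\rk\Jac(C)(K)$, so in any setting where $C^\tau$ can be identified with (a piece of) an abelian variety falling under the Gross--Zagier/Kolyvagin results, Weak BSD follows. For Parity the natural route is the Dokchitser--Dokchitser framework: write the global root number $w(C^\tau)$ as a product of local root numbers and match its sign against the parity of the $\tau$-isotypic Selmer rank via regulator constants and Brauer relations among representations of $G$. The main obstacle is unavoidable and present in every item: genuine unconditional progress requires identifying $C^\tau$ with an automorphic object, and outside a small catalogue of families (CM, modular, automorphic induction from known cases) such an identification is not available; consequently the role of Algorithm \ref{algo} and its numerical output is to supply evidence for these conjectures rather than to eliminate them.
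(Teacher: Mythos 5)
This statement is a conjecture, and the paper offers no proof of it: the introduction explicitly says the author does not pursue verification of the surrounding conjectural framework, and the role of Algorithm \ref{algo} and the computations of \S\ref{sec:examples} is only to supply numerical evidence (the paper points to \cite{DGKM2} for the precise formulations and for what little is actually known, e.g.\ their Theorem 2.24). So there is nothing in the paper to compare your proposal against, and you are right that each item is genuinely open in general; read as a survey of strategies rather than a proof, most of what you write is reasonable and consistent with how the literature approaches these questions.

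There is, however, one concrete error worth flagging: your treatment of the \textbf{Deligne} item. You claim it follows ``purely formally'' that $\ord_{s=1}L(C^\tau,s)=\ord_{s=1}L(C^{\sigma(\tau)},s)$ because the Dirichlet coefficients of $L(C^{\sigma(\tau)},s)$ are the $\sigma$-images of those of $L(C^\tau,s)$ and ``$\sigma$ fixes the rational point $s=1$.'' The first half is correct (it is Lemma \ref{lem:conj} in the paper), but the inference fails: $\sigma$ is a field automorphism of $\QQ(\tau)$, not a continuous map, and it does not commute with the analytic continuation that defines the order of vanishing at $s=1$. The quantity $\ord_{s=1}L(M,s)$ is an analytic invariant of the complex function, not an algebraic invariant of the coefficient sequence, and there is no formal mechanism transporting it along $\sigma$. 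Indeed, this equality of analytic ranks of Galois-conjugate motives is precisely Deligne's (open) conjecture --- already for Galois-conjugate weight-$2$ newforms with coefficients in a real quadratic field it is not known in general --- which is why the paper lists it as a conjecture rather than deducing it from Lemma \ref{lem:conj}. If your formal argument were valid, the item would not appear in Conjecture \ref{conjs} at all.
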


Further to the conjecture of his appearing above, Deligne makes a conjecture (\cite[Conjecture 2.8]{Deligne79}, cf.\ Conjecture \ref{conj:Deligne}) linking the `irrational parts' of motivic $L$-values to Deligne periods (Definition \ref{defn:del_period}). We refer to this as Deligne's Period Conjecture, numerical verification in new settings of which will be our primary focus. 

To do so, we work with superelliptic curves $y^n=f(x)$, a special case affording an explicit, testable formulation (Conjecture \ref{conj:super_deligne}). In particular, in \S\ref{sec:super_periods} we discuss how the results of \cite{SuperellipticPeriods} and the associated Magma implementation \cite{SuperellipticPeriodsCode} can be used to numerically compute the Deligne periods of motivic pieces of superelliptic curves. In \S\ref{sec:examples}, we combine this implementation with Algorithm \ref{algo} to numerically verify Deligne's Period Conjecture for the motivic pieces of several superelliptic curves.

\begin{theorem}\label{thm:main}
    For the $14$ genus $3$ superelliptic curves with $L(C,1)\ne0$ considered in \S\ref{sec:examples}, there exist $a,b,c\in\ZZ$ with $|a|,|b|\le 16$ and $|c|\le160$ such that
    \begin{equation}\label{eqn:approx}
        \left|\frac{L^{\text{approx}}(C^\tau,1)}{\Omega^\text{approx}_{C^\tau}}-\frac{a+b\cdot\omega}{c}\right| < \frac{1}{c^4},
    \end{equation}
    where $\tau$ is a non-$\QQ$-rational character of $C_3$ or $C_4$, $L^{\text{approx}}(C^\tau,1)$ is our numerical value for $L(C^\tau,1)$, $\Omega^\text{approx}_{C^\tau}$ is that for the Deligne period as in \S\ref{sec:super_periods}---both computed to precision at least $10$---and $\omega=i$ or $\omega=\zeta_3$ depending on the exponent of $C$.
\end{theorem}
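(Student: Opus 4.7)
The plan is to proceed by direct numerical verification, curve by curve. Since the statement is an existence claim for each of the $14$ curves, it suffices to produce a triple $(a,b,c)$ within the stated bounds and check the inequality \eqref{eqn:approx} directly; the theorem thus factors into fourteen independent computations following the same template.

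For each curve, I would first apply Algorithm \ref{algo} to generate the Dirichlet coefficients of $L(C^\tau,s)$ up to the cutoff dictated by the analytic conductor and the target precision of $10$ digits. Feeding these coefficients, together with the gamma factor, conductor, and sign predicted by Conjecture \ref{conjs}, into T. Dokchitser's $L$-function calculator \cite{DokCalc} then produces $L^{\text{approx}}(C^\tau,1)$; consistency of the functional equation \eqref{eqn:func} provides a built-in numerical check on the auxiliary data. In parallel, I would compute $\Omega^{\text{approx}}_{C^\tau}$ via \cite{SuperellipticPeriodsCode}, specialised through the explicit Deligne period formula for superelliptic $C^\tau$ established in \S\ref{sec:super_periods}.

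With both approximations in hand, I would form the complex ratio $R = L^{\text{approx}}(C^\tau,1)/\Omega^{\text{approx}}_{C^\tau}$, decompose it as $R = \alpha + \beta\omega$ with $\alpha,\beta \in \RR$, and search for small integer triples $(a,b,c)$ realising \eqref{eqn:approx}. The search can be carried out by LLL on the lattice spanned by suitable scalings of $1, \alpha, \beta$, or equivalently by brute-force enumeration over the box $|a|,|b|\le 16$, $|c|\le 160$, which contains only $\sim 10^5$ candidates. The $1/c^4$ tolerance is tight enough that any false positive from noise is overwhelmingly unlikely at the given precision, yet loose enough that a genuine rational-over-$\QQ(\omega)$ value in the box will be detected unambiguously.

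The main obstacle is precision management. To resolve a true value in $\QQ(\omega)$ with $|c|\le 160$ from numerical artefacts, one needs absolute accuracy comfortably better than $10^{-9}$ in both $L^{\text{approx}}(C^\tau,1)$ and $\Omega^{\text{approx}}_{C^\tau}$. This forces the Dirichlet coefficient computation in Algorithm \ref{algo} to extend well past the range where the partial Euler product visibly stabilises, and forces the numerical integration of holomorphic differentials in \cite{SuperellipticPeriodsCode} to run at appropriately elevated working precision. A secondary subtlety is the implementation-independence of the $\QQ(\tau)$-rational Betti and de Rham bases used to define $\Omega_{C^\tau}$; the explicit prescription in \S\ref{sec:super_periods} pins these down, so the triples $(a,b,c)$ extracted are canonical and may be tabulated in \S\ref{sec:examples} to complete the proof.
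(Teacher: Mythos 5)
Your proposal is correct and matches the paper's own approach: the theorem is established exactly by the curve-by-curve computations of \S\ref{sec:examples}, where the Dirichlet coefficients come from Algorithm \ref{algo} (with bad Euler factors and conductor exponents guessed and validated through the numerical functional-equation check, as you note), the periods come from \cite{SuperellipticPeriodsCode} via the recipe of \S\ref{sec:super_periods}, and explicit triples $(a,b,c)$ with $c=81$ or $c=160$ are exhibited and checked against the $1/c^4$ tolerance. No gaps.
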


\begin{remark}
    The statement of Theorem \ref{thm:main} references approximate values because, although our computations of Dirichlet series coefficients are provably correct, the algorithm of \cite{DokCalc} is not provably accurate without knowing the Hasse--Weil Conjecture for $L(C^\tau,s)$. In all of the examples herein, the functional equation is at least numerically verified to the precision of our calculations.
\end{remark}

\begin{remark}
    Given $\eps>0$, for almost all $\alpha\in\RR$, including all algebraic $\alpha$, there are only finitely many coprime pairs $(p,q)$ for which $|\alpha-p/q|<1/q^{2+\eps}$ (cf.\ \cite[Theorem 32]{KhinchinContFractions}, \cite{Roth}). With this in mind, looking at the real part and the imaginary part (divided by $\sqrt{3}$ if $\omega=\zeta_3$) of \eqref{eqn:approx} separately, we find vastly better approximations than one would expect for a non-$\QQ(\omega)$-rational. Theorem \ref{thm:main} therefore provides compelling evidence for the rationality of $L(C^\tau,1)/\Omega_{C^\tau}$ and for the validity of Deligne's Period Conjecture.
\end{remark}

\begin{example}[{= Example \ref{ex:pic1}}]
    Consider the curve
    \begin{equation}
        C/\QQ(\zeta_3): \hspace{0.2cm} y^3 = x^4+x^3+\zeta_3x^2+\zeta_3-2
    \end{equation}
    and write $\tau$ for a non-trivial character of the automorphism group $C_3\cong\langle y\mapsto\zeta_3\cdot y\rangle$. With $100,000$ Dirichlet series coefficients, we compute $L(C^\tau,1)\approx 1.8460900297\hdots - i\cdot 1.5447030118\hdots$ and $\Omega_{C^\tau}= -38.6473408677\hdots - i\cdot105.727015294\hdots .$
    Together, these computations yield
    \begin{equation}
            \frac{L(C^\tau,1)}{\Omega_{C^\tau}}\approx -0.0185185185185\hdots + i\cdot0.0106916716517\hdots \approx \frac{\zeta_3-1}{81}.
    \end{equation}
\end{example}

\begin{remark}
    Naturally, it would be of interest in future work to go beyond the rationality predicted by Deligne's Period Conjecture to say something about the rational $L$-values it predicts to exist, \textit{{\`a} la} Birch and Swinnerton-Dyer. Between \cite{DokchitserEvansWiersema} and \cite{BurnsMC}, it is explained that $L$-values of Artin-twists of elliptic curves ought to depend on the Galois-module structure of the Tate--Shafarevich group. Utilising this philosophy, the recent work \cite{MaistretShukla2025} in the same setting seeks to explicitly describe the ideal generated by the $L$-value of an Artin-twist in terms of this Galois-module structure. As this setting is a special case of motivic pieces of curves, one ought to believe that $L$-values of motivic pieces of curves depend (at least) on BSD-invariants and associated $G$-module structures.

    It is also worth noting that, in the case of Artin-twisted $L$-functions of abelian varieties, further integral refinements of Deligne's Period Conjecture are an active area of study (e.g.\ \cite{EvansThesis,EvansMCW}) and these may extend to our setting.
\end{remark}

As another application, we demonstrate how the algorithm discussed in \S\ref{sec:algo} can be used to factorise $L$-functions of curves with extra endomorphisms of Hecke type (Definition \ref{def:HeckeType}), rather than extra automorphisms. For example, by studying an action of the dihedral group of order $10$ on a genus $6$ cover, we recover the known factorisations of $L$-functions of a family of genus $2$ curves over $\QQ$ whose Jacobians are of $\GL_2$-type into products of $L$-functions of classical newforms:

\begin{proposition}[{= Proposition \ref{prop:mod_form}}]\label{prop:mod_form_intro}
    Consider the one-parameter family of genus $2$ curves
    \[C_t/\QQ: \hspace{0.2cm} y^2 = x^6-2x^5+x^4+(6-4t)x^3+2x^2+4x+1.\] 
    There is a degree $2$ cover $X_t$ of $C_t$ admitting an action by the dihedral group of order $10$ such that $C_t\cong X_t/C_2$ and so $L(C_t,s)=L(X_t^\rho,s)\cdot L(X_t^{\rho'},s)$, where $\tau$ and $\tau'$ are conjugate irreducible two-dimensional representations. The factors $L(X^\rho_t,s)$ and $L(X^{\rho'}_t,s)$ are the $L$-functions of conjugate newforms with coefficients in $\QQ(\sqrt{5})$.
\end{proposition}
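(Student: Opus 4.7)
The strategy is to produce a degree-$2$ cover $X_t \to C_t$ with $\QQ$-rational $D_{10}$-action, extract the factorisation from the general motivic-piece decomposition of $L(X_t,s)$, and identify the factors via the known modularity of $\Jac(C_t)$. For the first step, I would construct $X_t$ as an explicit degree-$2$ cover of $C_t$, for example by adjoining a square root of some function $h(x,y)$ on $C_t$ chosen so that the expected order-$5$ automorphism---absent from $\Aut_{\bar\QQ}(C_t)$---becomes visible on $X_t$. Explicit generators $r, s \in \Aut_\QQ(X_t)$ are then written down and the relations $r^5 = s^2 = 1$, $srs = r^{-1}$ verified, confirming a $D_{10}$-action with $X_t/\langle s\rangle \cong C_t$.

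Given such $X_t$, the irreducible complex representations of $D_{10}$ are the trivial and sign characters $\one, \eps$ together with two $2$-dimensional representations $\rho, \rho'$, Galois-conjugate under $\Gal(\QQ(\sqrt{5})/\QQ)$ since their character values involve $2\cos(2\pi/5) = (\sqrt{5}-1)/2$. The general motivic-piece decomposition reads
\[ L(X_t, s) = L(X_t^{\one},s)\,L(X_t^{\eps},s)\,L(X_t^{\rho},s)^2\,L(X_t^{\rho'},s)^2. \]
Using $H^1(C_t,\QQ_\ell) = H^1(X_t,\QQ_\ell)^{\langle s\rangle}$ together with the restriction dimensions $\dim \one^{\langle s\rangle} = \dim \rho^{\langle s\rangle} = \dim (\rho')^{\langle s\rangle} = 1$ and $\dim \eps^{\langle s\rangle} = 0$, and arranging the construction so that $X_t/D_{10}$ has genus $0$ (killing the trivial multiplicity $n_{\one}$), a dimension count in $H^1(X_t,\QQ_\ell)$ with $g(X_t) = 6$ forces $n_\rho = n_{\rho'} = 2$ and $n_\eps = 4$, yielding $L(C_t,s) = L(X_t^{\rho},s)\,L(X_t^{\rho'},s)$ with each factor of degree $2$.

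Finally, $\Jac(C_t)$ is an abelian surface of $\GL_2$-type over $\QQ$ (its endomorphism algebra contains $\QQ(\sqrt{5})$), so by Ribet's theorem together with Khare--Wintenberger modularity it is isogenous to a $\Gal(\QQ(\sqrt{5})/\QQ)$-orbit of newform quotients of $J_1(N)$, yielding $L(C_t,s) = L(f,s)\,L(f^\sigma,s)$ for a conjugate pair $\{f, f^\sigma\}$ of weight-$2$ newforms with Hecke field $\QQ(\sqrt{5})$. Comparing Euler factors at a finite set of good primes---computed on the motivic-piece side via Algorithm \ref{algo} and on the newform side via standard Hecke-eigenvalue computations---forces $L(X_t^{\rho},s) = L(f,s)$ (up to swapping $f \leftrightarrow f^\sigma$) by Faltings--Serre. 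The primary obstacle is the first step: exhibiting $X_t$ over $\QQ$ with the full $D_{10}$-action and verifying $C_t \cong X_t/\langle s\rangle$; the remaining analysis is a finite character-theoretic calculation combined with Algorithm \ref{algo}.
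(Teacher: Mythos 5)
Your middle step---the character-theoretic multiplicity count giving $L(C_t,s)=L(X_t^{\rho},s)\,L(X_t^{\rho'},s)$---is sound, and is a legitimate substitute for the paper's route via the Kani--Rosen theorem and the Brauer relation $\one-2C_2-C_5+G$ in $D_{10}$. But both ends of your argument have genuine gaps. You defer the construction of $X_t$ (``adjoin a square root of some function $h$ so that the order-$5$ automorphism becomes visible''), and this is precisely the nontrivial content of the first assertion: a generic genus-$2$ curve admits no quadratic cover carrying an order-$5$ automorphism, so one cannot hope to find $h$ without exploiting special structure. The paper gets the cover from the provenance of $C_t$: it is (birationally) the plane curve $f_t(x,y)=0$ for an explicit family of quintics $f_{t,y}(x)$ whose Galois group over $\QQ(t,y)$ is $D_{10}$. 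The degree-$5$ map $x\colon C_t\to\mathbb{P}^1_y$ therefore has Galois closure a curve $X_t$ with group $D_{10}$ and $C_t=X_t/C_2$, cut out explicitly by further adjoining $\Delta=\sqrt{-\delta_{t,y}}$, the quadratic resolvent of the quintic. Without identifying $C_t$ as such a quotient, your first step does not go through.

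The final step also has a gap. Knowing both $L(C_t,s)=L(f,s)\,L(f^\sigma,s)$ and $L(C_t,s)=L(X_t^{\rho},s)\,L(X_t^{\rho'},s)$ does not by itself match the factors, and your proposed fix---Euler-factor comparison at finitely many primes plus Faltings--Serre---is a per-$t$ computation (the requisite finite set of primes depends on the conductor, hence on $t$), so it cannot establish the statement uniformly for the family. The paper's argument is structural: the embedding $\QQ(\sqrt{5})\hookrightarrow\End(\Jac(C_t))\otimes\QQ$ realising $\GL_2$-type is the Hecke-type endomorphism $\pi_*\circ g_*\circ\pi^*=(1+\sqrt{5})/2$ induced by the $D_{10}$-action on $X_t$ (Lemma \ref{lem:GL2}), so the $\QQ(\sqrt{5})\otimes\QQ_\ell$-module decomposition of $V_\ell(\Jac(C_t))$ that produces the newform factorisation is literally the motivic-piece decomposition of Remark \ref{rmk:l_func_decomp} (with $A_{\hat\rho}=\Jac(C_t)$). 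That observation, rather than a trace comparison, is what forces $L(X_t^{\rho},s)=L(f,s)$ up to relabelling the conjugate pair.
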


In general, to factorise the $L$-function of a Jacobian $\Jac(C)$ of $\GL_2$-type into $L$-functions of newforms, one must compute the conductor of $C$ and thereby obtain the level of the associated newforms, reducing to a finite brute-force check. Proposition \ref{prop:mod_form_intro} demonstrates how---at least in principle---Algorithm \ref{algo} allows, when $C$ has endomorphisms of Hecke type, direct computation of Fourier coefficients without needing even full knowledge of the conductor of $C$.

\subsection*{Conventions} Unless otherwise stated, $C/K$ denotes throughout a curve over a number field equipped with an action of the finite group $G$ by $K$-automorphisms, with $\tau$ a representation of $G$. All of our computations are carried out in Magma and all code used is available at

\begin{center}
    \url{https://github.com/hspen99/MotivicPiecesOfCurves} \cite{MotivicPiecesCode}.
\end{center}

\subsection*{Acknowledgements} 
This work finds some of its roots in a first-year PhD `mini-project' on which I collaborated with Giorgio Navone and Corijn Rudrum. My utmost thanks goes to the both of them for all of their thoughts and contributions in those early days. Likewise, I thank Vladimir Dokchitser for suggesting that project and for his dedicated supervision, both then and now. I thank Tim Dokchitser for providing the family of genus $2$ curves discussed in \S\ref{sec:dihedral}. I thank Dominik Bullach for a tremendously helpful discussion regarding Deligne's conjecture and for comments on a draft. I also thank John Voight for helpful correspondence.

This work was supported by the Engineering and Physical Sciences Research Council [EP/S021590/1], the EPSRC Centre for Doctoral Training in Geometry and Number Theory (The London School of Geometry and Number Theory), University College, London.

\section{Motivic pieces of curves}\label{sec:motivic_pieces}
The framework of motivic pieces of curves (or abelian varieties) was laid out by V. Dokchitser, Green, Konstantinou and Morgan in their work on the Parity Conjecture for Jacobians of curves \cite[\S2]{DGKM2}. The set-up is as follows: fix a curve $C$ over a number field $K$ with the action of a finite group $G$ by $K$-automorphisms, as well as embeddings $\bar{\QQ}\subset \bar{\QQ}_\ell\subset \CC$ for each prime $\ell$ such that $\bar{\QQ}\subset \CC$ is independent of $\ell$. The $G$-action on $C$ extends to $H^1_\ell(C)=(T_\ell \Jac(C)\otimes_{\ZZ_\ell}\bar{\QQ}_\ell)^*$ for each prime $\ell$.

\begin{definition}[{= \cite[Definition 2.3]{DGKM2}}]\label{defn:motivic_piece}
    For a $G$-representation $\tau$, define
    \[ H^1_\ell(C^\tau)=\Hom_G(\tau,H^1_\ell(C)),\]
    on which $G_K$ acts by postcomposition. Implicitly this definition depends on $\ell$, but in fact it yields a compatible system of $\ell$-adic representations (cf.\ \cite[Corollary 2.14]{DGKM2}).
\end{definition}

This system of $\ell$-adic representations gives rise to an $L$-function in the usual way.

\begin{definition}\label{defn:l_func}
    We write $L(C^\tau,s)$ for the $L$-function $L((H^1_\ell(C^\tau))_{\ell},s)$, given for $\Re(s)>3/2$ by
    \[ L(C^\tau,s) = \prod_{(0)\ne\pp\triangleleftneq\OO_K} L_\pp(C^\tau,N(\pp)^{-s})^{-1}, \]
    where $L_\pp(C^\tau,T):=\det(1-\Frob_\pp^{-1} \cdot T \mid H^1_\ell(C^\tau)^{I_\pp})$ for any choice of $\pp\nmid\ell$. Here $I_\pp$ is the inertia group of the completion of $K$ at $\pp$, and $\Frob_\pp$ is (a choice of) arithmetic Frobenius.
\end{definition}

\begin{remark}\label{rmk:l_func_decomp}
    A less explicit way of constructing the $L$-function $L(C^\tau,s)$ as in Definition \ref{defn:l_func} is as follows: construct the abelian variety $A_{\hat\tau}$ whose $\ell$-adic cohomology is $\oplus_{\sigma\in\Gal(\QQ(\tau)/\QQ)}H^1_\ell(C^{\sigma(\tau)})$---this is a quotient of $\Jac(C)$ appearing with multiplicity $\dim\tau$ in the isogeny decomposition thereof. We have $\QQ(\tau)\hookrightarrow \End(A_{\hat\tau})\otimes\QQ$ and the corresponding decomposition on the level of $L$-functions coming from the fact that $V_\ell(A)$ is a $\QQ(\tau)\otimes\QQ_\ell$-module is precisely that coming from that of $L(C,s)$ into $L$-functions of motivic pieces. This point of view will prove helpful in \S\ref{sec:dihedral}.
\end{remark}

We note that the $\tau$-isotypic piece of $H^1_\ell(C)$ differs from the motivic piece corresponding to $\tau$ when $\dim\tau>1$. This is because, for $\tau$ irreducible, we have a canonical isomorphism
\begin{equation}\label{eqn:isotypic}
        \Hom_G(\tau,V)^{\oplus\dim\tau}\cong V^\tau,
\end{equation}
where $V$ is any $G$-representation and $V^\tau$ is the $\tau$-isotypic piece thereof. This brings us to an important analogue of the `Artin-formalism'.

\begin{lemma}[{= \cite[Proposition 2.8]{DGKM2}}]
    For representations $\tau_1$ and $\tau_2$ of $G$, and $\rho$ a representation of $H\le G$:
    \begin{enumerate}
        \item $H^1_\ell(C^{\tau_1+\tau_2})\cong H^1_\ell(C^{\tau_1})\oplus H^1_\ell(C^{\tau_2})$
        \item $H^1_\ell(C^{\Ind_H^G \rho})\cong H^1_\ell(C^\rho)$.
    \end{enumerate}
\end{lemma}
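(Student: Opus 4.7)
The plan is to unwrap the definition $H^1_\ell(C^\tau)=\Hom_G(\tau,H^1_\ell(C))$ and reduce each assertion to a purely formal fact about Hom-spaces in the category of $G$-representations, then verify that the resulting isomorphism is automatically $G_K$-equivariant because $G_K$ acts by postcomposition on the target.

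For part (1), I would invoke the additivity of $\Hom_G(-,V)$ in the first argument: any $G$-morphism $\tau_1\oplus\tau_2\to V$ is determined by, and freely determined by, its restrictions to the two summands. Applying this with $V=H^1_\ell(C)$ immediately gives
\[\Hom_G(\tau_1\oplus\tau_2, H^1_\ell(C)) \cong \Hom_G(\tau_1,H^1_\ell(C))\oplus\Hom_G(\tau_2,H^1_\ell(C)).\]
For $G_K$-equivariance, note that $\sigma\in G_K$ acts on $\varphi\colon\tau_1\oplus\tau_2\to H^1_\ell(C)$ by $\varphi\mapsto\sigma\circ\varphi$, and this commutes with restriction to each summand; hence the displayed isomorphism intertwines the $G_K$-actions.

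For part (2), I would appeal to Frobenius reciprocity, i.e.\ the adjunction between restriction and induction in the category of finite-dimensional representations of finite groups: for any $G$-representation $V$, there is a natural isomorphism
\[\Hom_G(\Ind_H^G\rho,V)\cong\Hom_H(\rho,\Res_H^G V),\]
given explicitly by restricting a $G$-morphism to $1\otimes\rho\subset\Ind_H^G\rho$, with inverse given by the standard extension formula. Setting $V=H^1_\ell(C)$ and observing that, since $H\le G$ acts on $C$ by the restricted action, $H^1_\ell(C^\rho)$ is by definition $\Hom_H(\rho,\Res_H^G H^1_\ell(C))$, the isomorphism identifies the two sides.

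The only substantive check is that the Frobenius reciprocity isomorphism is $G_K$-equivariant, and here I anticipate the main (mild) obstacle, since Frobenius reciprocity is usually stated without a Galois action in play. The point is that the isomorphism is natural in $V$ as a $G$-representation, so the $G_K$-action on $V$ induces compatible $G_K$-actions on both sides via postcomposition; explicitly, both the restriction map and its inverse involve only pre-composition with $G$-linear data on the source $\rho$ or $\Ind_H^G\rho$, which commutes with the $G_K$-action on the target. This suffices to upgrade the plain isomorphism of $\bar\QQ_\ell$-vector spaces to an isomorphism of $\ell$-adic Galois representations, finishing the proof.
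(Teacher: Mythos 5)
Your proposal is correct and follows exactly the same route as the paper, which simply cites additivity of $\Hom$ for (1) and Frobenius reciprocity for (2); your additional verification that both isomorphisms are $G_K$-equivariant (because $G_K$ acts by postcomposition on the target) is the right detail to supply and is left implicit in the paper.
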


\begin{proof}
    (1) follows from additivity of $\Hom$. (2) follows from Frobenius reciprocity.
\end{proof}

The following is an important formula for the trace of an element of the absolute Galois group acting on the $\ell$-adic representation of a motivic piece in terms of the Galois action on $H^1_{\ell}(C)$ as a whole.

\begin{proposition}[{= \cite[Lemma 2.12]{DGKM2}}]\label{prop:traces}
    For $g\in G$ and $\alpha\in G_K$, we have
    \begin{equation}\label{eqn:trace}
        \Tr(\alpha\mid H^1_\ell(C^\tau))=\frac{1}{|G|}\sum_{g\in G}\Tr(g^{-1}\mid \tau)\Tr(\alpha\cdot g | H^1_\ell(C)).
    \end{equation}
\end{proposition}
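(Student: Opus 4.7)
The plan is to reduce the trace on $\Hom_G(\tau, H^1_\ell(C))$ to a trace on $V := H^1_\ell(C)$ itself by means of the central idempotent
$$e_\tau = \frac{\dim\tau}{|G|}\sum_{g\in G} \Tr(g^{-1}\mid\tau)\, g \;\in\; \CC[G],$$
which projects $V$ onto its $\tau$-isotypic summand $V^\tau$.

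First I would observe that both sides of \eqref{eqn:trace} are additive in $\tau$: the left-hand side by part (1) of the preceding lemma (which gives $H^1_\ell(C^{\tau_1+\tau_2}) \cong H^1_\ell(C^{\tau_1})\oplus H^1_\ell(C^{\tau_2})$), and the right-hand side by linearity of the character $\Tr(g^{-1}\mid -)$. Hence it suffices to handle the case of irreducible $\tau$.

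Assuming $\tau$ irreducible, I would invoke the canonical isomorphism \eqref{eqn:isotypic}, which gives $\Hom_G(\tau, V)^{\oplus\dim\tau} \cong V^\tau$. The key observation that makes the argument function is that $G$ acts on $C$ via $K$-automorphisms, so the induced $G$-action on $V$ commutes with the $G_K$-action; in particular $V^\tau$ is $G_K$-stable and \eqref{eqn:isotypic} is $G_K$-equivariant when $G_K$ acts on the left-hand side by postcomposition. Taking traces on both sides then yields
$$(\dim\tau)\cdot \Tr(\alpha\mid H^1_\ell(C^\tau)) = \Tr(\alpha\mid V^\tau).$$

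Finally, the same commutation of actions implies that $\alpha$ and $e_\tau$ commute as endomorphisms of $V$. Since $e_\tau$ is idempotent with image $V^\tau$, this gives $\Tr(\alpha\mid V^\tau) = \Tr(\alpha\, e_\tau \mid V)$; expanding $e_\tau$ and dividing through by $\dim\tau$ delivers \eqref{eqn:trace}. I do not foresee a substantive obstacle here: the explicit formula for $e_\tau$ is standard character theory, and the only real conceptual input is the commutation of the $G$- and $G_K$-actions on $V$, which is immediate from $G$ acting by morphisms defined over $K$.
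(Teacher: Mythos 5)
Your argument is correct, and every step holds up: the reduction to irreducible $\tau$ via additivity, the $G_K$-equivariance of the canonical isomorphism $\Hom_G(\tau,V)^{\oplus\dim\tau}\cong V^\tau$ (which does follow from the $G$- and $G_K$-actions on $V=H^1_\ell(C)$ commuting), the identity $\Tr(\alpha\mid V^\tau)=\Tr(\alpha e_\tau\mid V)$ for the central idempotent $e_\tau$, and the final expansion by linearity of the trace. Note that the paper itself gives no proof, deferring to \cite[Lemma 2.12]{DGKM2}; the standard argument there is slightly more direct than yours and worth knowing. One writes $\Hom_G(\tau,V)=(\tau^*\otimes V)^G$ and uses the averaging projector $\pi=\tfrac{1}{|G|}\sum_{g\in G}g$ onto $G$-invariants: since $\alpha$ (acting on the $V$ factor only) commutes with the diagonal $G$-action, one gets
\begin{equation}
\Tr\bigl(\alpha\mid(\tau^*\otimes V)^G\bigr)=\Tr\bigl(\alpha\pi\mid\tau^*\otimes V\bigr)=\frac{1}{|G|}\sum_{g\in G}\Tr(g\mid\tau^*)\Tr(\alpha g\mid V),
\end{equation}
and $\Tr(g\mid\tau^*)=\Tr(g^{-1}\mid\tau)$ finishes it. That route handles arbitrary $\tau$ in one stroke, with no reduction to irreducibles and no appeal to the isotypic decomposition, whereas your version ties the statement to the isomorphism \eqref{eqn:isotypic} already highlighted in the paper. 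Both are legitimate; yours costs two extra (correct) reduction steps but makes the role of the $\tau$-isotypic piece explicit.
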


Immediately, this yields:

\begin{lemma}\label{lem:conj}
    For $\sigma\in\Gal(\QQ(\tau)/\QQ)$, we have 
    \[\det(1-\Frob_\pp^{-1}T | H^1_\ell(C^\tau))^\sigma = \det(1-\Frob_\pp^{-1}T | H^1_\ell(C^{\sigma(\tau)})).\]
\end{lemma}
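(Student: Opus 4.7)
The plan is to reduce the statement to an identity of traces of all powers of $\Frob_\pp^{-1}$, from which the characteristic polynomial identity follows by Newton's identities. Concretely, I would apply Proposition \ref{prop:traces} with $\alpha = \Frob_\pp^{-n}$ to write
\[
\Tr(\Frob_\pp^{-n} \mid H^1_\ell(C^\tau)) = \frac{1}{|G|} \sum_{g \in G} \Tr(g^{-1} \mid \tau)\,\Tr(\Frob_\pp^{-n} g \mid H^1_\ell(C)),
\]
and then apply $\sigma$ termwise to the right-hand side.

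For this to work, two rationality facts must be checked. First, the character values $\Tr(g^{-1}\mid\tau)$ lie in $\QQ(\tau)$ by definition, and the character of the conjugate representation is $\sigma\circ\chi_\tau$, so $\sigma(\Tr(g^{-1}\mid\tau)) = \Tr(g^{-1}\mid\sigma(\tau))$. Second, the trace $\Tr(\Frob_\pp^{-n} g \mid H^1_\ell(C))$ is a rational integer: because $g$ is a $K$-automorphism, its action on $H^1_\ell(C)$ commutes with that of $G_K$, and applying the Lefschetz trace formula to the self-map $g\circ\Frob_\pp^n$ of the special fibre expresses this trace (up to the $H^0$ and $H^2$ contributions, which are manifestly integers) as a count of fixed points. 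Hence $\sigma$ fixes it.

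Combining these observations, the right-hand side transforms under $\sigma$ into the trace formula for $\sigma(\tau)$, so that
\[
\sigma\bigl(\Tr(\Frob_\pp^{-n} \mid H^1_\ell(C^\tau))\bigr) = \Tr(\Frob_\pp^{-n} \mid H^1_\ell(C^{\sigma(\tau)})) \quad \text{for all } n \ge 1.
\]
Since the coefficients of the characteristic polynomial of an endomorphism are $\QQ$-polynomial expressions in the traces of its powers (Newton's identities), applying $\sigma$ coefficient-wise to $\det(1-\Frob_\pp^{-1}T \mid H^1_\ell(C^\tau))$ yields $\det(1-\Frob_\pp^{-1}T \mid H^1_\ell(C^{\sigma(\tau)}))$, as claimed.

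The main subtlety—indeed the only real content beyond bookkeeping—is the integrality of $\Tr(\Frob_\pp^{-n} g \mid H^1_\ell(C))$. At primes of good reduction the Lefschetz argument above is clean; at bad primes, when one instead takes $I_\pp$-invariants, the same proof goes through because the $G$-action commutes with that of $G_K$ (so preserves $H^1_\ell(C)^{I_\pp}$), and one appeals to a Grothendieck--Lefschetz trace formula on a suitable regular model.
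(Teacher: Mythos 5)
Your proof is correct and follows exactly the route the paper intends: the paper derives this lemma ``immediately'' from the trace formula of Proposition \ref{prop:traces}, and you have simply supplied the details it leaves implicit (Galois conjugation of character values, rationality of the mixed traces $\Tr(\Frob_\pp^{-n}g\mid H^1_\ell(C))$ via Lefschetz, and passage from traces of powers to the characteristic polynomial via Newton's identities).
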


Another consequence of Proposition \ref{prop:traces} is the following fact about conductors, which will prove helpful in our calculations. The conductor of an $\ell$-adic representation is an ideal which---in some sense---measures bad reduction, and happens to be the fundamental invariant associated to $L$-functions. See, for example, \cite{UlmerConductors} for the relevant background.

\begin{lemma}[{= \cite[Lemma 3.10]{WildConductors}}]\label{lem:conductors}
    Let $\tau$ and $\tau'$ be conjugate representations of $G$. Then $C^\tau$ and $C^{\tau'}$ have equal conductor norms.
\end{lemma}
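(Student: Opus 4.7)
The plan is to realise both $H^1_\ell(C^\tau)$ and $H^1_\ell(C^{\tau'})$ as arising from a common $\QQ(\tau)$-rational object, and then deduce equality of their conductors from Galois-equivariance. Write $\tau'=\sigma(\tau)$ for some $\sigma\in\Gal(\QQ(\tau)/\QQ)$ and consider the abelian variety $A_{\hat\tau}/K$ of Remark \ref{rmk:l_func_decomp}, which carries a $K$-rational action $\QQ(\tau)\hookrightarrow \End_K(A_{\hat\tau})\otimes\QQ$. Its $\ell$-adic cohomology $H^1_\ell(A_{\hat\tau})$ is therefore a free $\QQ(\tau)\otimes_\QQ\QQ_\ell$-module carrying a $\QQ(\tau)\otimes\QQ_\ell$-linear action of $G_K$. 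Decomposing $\QQ(\tau)\otimes\QQ_\ell\cong\prod_{\lambda\mid\ell}\QQ(\tau)_\lambda$ gives a $G_K$-equivariant splitting $H^1_\ell(A_{\hat\tau})=\bigoplus_{\lambda\mid\ell}W_\lambda$ into $\QQ(\tau)_\lambda$-vector spaces of common dimension, and tensoring $W_\lambda$ with $\bar{\QQ}_\ell$ along the embedding $\QQ(\tau)_\lambda\hookrightarrow\bar{\QQ}_\ell$ matching the one defining $\tau$ recovers $H^1_\ell(C^\tau)$; conjugate characters $\sigma(\tau)$ correspond to conjugate primes $\sigma(\lambda)$.

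Next, I would exploit the fact that the automorphism $\sigma\otimes\id_{\QQ_\ell}$ of $\QQ(\tau)\otimes\QQ_\ell$ permutes its factor idempotents, sending the $\lambda$-component onto the $\sigma(\lambda)$-component. Because the $G_K$-action is $\QQ(\tau)\otimes\QQ_\ell$-linear, it commutes with this automorphism, so one obtains an isomorphism $W_\lambda\cong W_{\sigma(\lambda)}$ of $G_K$-representations once the scalar fields $\QQ(\tau)_\lambda$ and $\QQ(\tau)_{\sigma(\lambda)}$ are identified via $\sigma$. In particular, $W_\lambda$ and $W_{\sigma(\lambda)}$ become isomorphic as $\ell$-adic $G_K$-representations after restriction of scalars to $\QQ_\ell$.

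What remains is the observation that the conductor exponent $f_\pp$ of an $\ell$-adic $G_K$-representation at a prime $\pp\nmid\ell$ depends only on the isomorphism class of the underlying $G_K$-action and is invariant under finite extension of the $\ell$-adic coefficient field (standard, since the higher ramification filtration computation only uses ranks of fixed subspaces). Combined with the previous step this yields $f_\pp(H^1_\ell(C^\tau))=f_\pp(W_\lambda)=f_\pp(W_{\sigma(\lambda)})=f_\pp(H^1_\ell(C^{\tau'}))$. For primes $\pp\mid\ell$ I would rerun the argument with an auxiliary prime $\ell'\nmid\pp$ and invoke compatibility of the $\ell$-adic system from Definition \ref{defn:motivic_piece} to see the exponent is independent of this choice. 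The conductor \emph{ideals} then agree at every prime, and so \emph{a fortiori} do their norms. The main obstacle is the content of the second paragraph: carefully tracking how $\sigma$ acts on the $\QQ(\tau)\otimes\QQ_\ell$-structure and verifying that the induced permutation of $\lambda$-components genuinely intertwines the $G_K$-actions; once that is in place, the rest is bookkeeping.
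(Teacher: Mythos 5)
The central step of your argument --- the claim in your second paragraph that $W_\lambda\cong W_{\sigma(\lambda)}$ as $G_K$-representations after restricting scalars to $\QQ_\ell$ --- is false, and the mechanism you propose for producing the isomorphism does not exist. The map $\sigma\otimes\id_{\QQ_\ell}$ is a ring automorphism of $\QQ(\tau)\otimes_\QQ\QQ_\ell$; it permutes the idempotents, but it does not act on the \emph{module} $H^1_\ell(A_{\hat\tau})$, and "the $G_K$-action commutes with this automorphism" is not a meaningful statement ($G_K$ acts on the module, $\sigma$ on the coefficient ring). Indeed the conclusion cannot hold in general: if $W_\lambda$ and $W_{\sigma(\lambda)}$ were isomorphic as $\QQ_\ell[G_K]$-modules they would have equal local polynomials at every prime and hence $L(C^\tau,s)=L(C^{\tau'},s)$, whereas Lemma \ref{lem:conj} only gives that the local polynomials are $\sigma$-conjugate, and Example \ref{ex:pic1} exhibits a piece with $L(C^\tau,1)$ non-real, so $L(C^\tau,s)\neq L(C^{\bar\tau},s)$. (The one-dimensional case of a CM elliptic curve over its CM field already shows the two $\lambda$-components carry distinct, merely conjugate, characters.)

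What is true, and what the lemma actually rests on (the paper imports it from \cite{WildConductors} and flags it as a consequence of Proposition \ref{prop:traces}), is weaker than an isomorphism of representations: the local conductor exponent at $\pp\nmid\ell$ is a rational integer built from the quantities $\dim V^{H}$ for $H$ ranging over the (higher) ramification subgroups of inertia, together with the Swan term, and each $\dim V^{H}=\frac{1}{|H|}\sum_{h\in H}\Tr(h\mid V)$ once one has reduced to a finite inertia action. By Proposition \ref{prop:traces} these traces for $H^1_\ell(C^{\sigma(\tau)})$ are obtained from those for $H^1_\ell(C^{\tau})$ by applying $\sigma$, and since the resulting dimensions and conductor exponents are integers they are fixed by $\sigma$; hence the exponents agree prime by prime even though the inertia representations themselves are only conjugate (e.g.\ inertia may act through $\chi$ on one piece and $\chi^2$ on the other). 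Your first and third paragraphs are fine as bookkeeping, and your handling of $\pp\mid\ell$ via an auxiliary $\ell'$ is correct, but the proof needs this trace-and-integrality argument in place of the claimed equivariant isomorphism.
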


\begin{remark}\label{rmk:GammaCond}
    For the most part, we will be interested in the conductor $N(C^\tau)$ of the restriction of scalars $\Res_{K/\QQ}C^\tau$, rather than the usual conductor $\mathcal{N}(C^\tau)$ of $C^\tau$. This is because Magma works with Dirichlet series over $\QQ$. Note that the two quantities are related by $N=N_{K/\QQ}(\mathcal{N})\cdot|\Delta_{K/\QQ}|^{\langle\tau,H^1_\ell(C)\rangle}$, where $\Delta_{K/\QQ}$ is the discriminant of this extension (cf.\ \cite[p.\ 178]{MilneAV}).
\end{remark}

We write $\Lambda(C^\tau,s)$ for the completed $L$-function
    \[ \Lambda(C^\tau,s) := L_\infty(C^\tau,s)\cdot L(C^\tau,s), \]
where 
\begin{equation}\label{eqn:GammaFactor}
    L_\infty(C^\tau,s)=\left(\frac{N(C^{\tau})}{\pi^{\langle\tau,H^1_\ell(C)\rangle\cdot[K:\QQ]}}\right)^{\frac{s}{2}}\cdot\left(\Gamma\left(\frac{s}{2}\right)\Gamma\left(\frac{s+1}{2}\right)\right)^\frac{\langle\tau,H^1_\ell(C)\rangle\cdot[K:\QQ]}{2},
\end{equation}
which one should think of as the contribution to the Euler product from infinite primes. The Hasse--Weil Conjecture for motivic pieces then reads as follows:

\begin{conj}[Hasse--Weil]\label{conj:HasseWeil}
    $L(C^\tau,s)$ admits meromorphic continuation to $\CC$, and the completed $L$-function $\Lambda(C^\tau,s)$ satisfies the functional equation
    \begin{equation}\label{eqn:func2}
        \Lambda(C^\tau,s)=w(C^\tau)\cdot \Lambda(C^{\tau^*},2-s),
    \end{equation}
    where $w(C^\tau)$ is the root number of $C^\tau$.
\end{conj}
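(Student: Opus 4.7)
The plan mirrors the classical reduction of Hasse--Weil for motives to the automorphy of abelian varieties. By Brauer's induction theorem, every virtual character of $G$ is a $\ZZ$-linear combination of characters induced from $1$-dimensional characters of subgroups $H\le G$. Combined with the Artin-formalism isomorphism $H^1_\ell(C^{\Ind_H^G\chi})\cong H^1_\ell(C^\chi)$ stated in the lemma immediately preceding Proposition \ref{prop:traces}, this yields a factorisation of $L(C^\tau,s)$ as a product (with possibly negative integer exponents) of $L(C^\chi,s)$ for $1$-dimensional characters $\chi$ of various subgroups. Since duality commutes with induction, and both the gamma factor \eqref{eqn:GammaFactor} and the conductor (Lemma \ref{lem:conductors}) behave multiplicatively across this decomposition, the general functional equation \eqref{eqn:func2} reduces to the $1$-dimensional case.

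For $\chi$ a $1$-dimensional character of some $H\le G$, I would pass to the abelian variety $A_{\hat\chi}$ of Remark \ref{rmk:l_func_decomp}, whose $\ell$-adic cohomology realises $\bigoplus_{\sigma\in\Gal(\QQ(\chi)/\QQ)} H^1_\ell(C^{\sigma(\chi)})$, so that $L(A_{\hat\chi}/K,s)=\prod_\sigma L(C^{\sigma(\chi)},s)$ for $\Re(s)>3/2$. Granting Hasse--Weil for this abelian variety, Lemma \ref{lem:conj} identifies the Euler factors of the individual $L(C^{\sigma(\chi)},s)$ as the $\sigma$-Galois conjugates of those of $L(C^\chi,s)$; a standard Galois-averaging/projector argument over the orbit of $\chi$ then extracts each conjugate factor individually, yielding its meromorphic continuation. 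The functional equation \eqref{eqn:func2} for the individual pieces follows from that of $L(A_{\hat\chi}/K,s)$ by matching root numbers and infinity factors across the Galois orbit.

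The fundamental obstacle is that Hasse--Weil for an abelian variety over a number field is itself an open conjecture, equivalent by the Langlands philosophy to automorphy. The proposal therefore reduces Conjecture \ref{conj:HasseWeil} to the automorphy of the auxiliary abelian varieties $A_{\hat\chi}$, but does not resolve it in general. The cases within reach of current technology are precisely those for which each $A_{\hat\chi}$ falls within a known modularity range---$\QQ$-curves, $\GL_2$-type abelian varieties over $\QQ$ (via Serre's conjecture), or abelian varieties with sufficient CM---and it is in such settings, illustrated by Proposition \ref{prop:mod_form_intro}, that one can hope to verify the conjecture unconditionally. A secondary subtlety to be addressed is that the Galois-extraction argument above produces meromorphic (rather than holomorphic) continuation, and promoting this requires controlling zeros and poles on the critical strip, which is itself typically a deep consequence of the underlying automorphy.
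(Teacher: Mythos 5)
This statement is a \emph{conjecture}: the paper offers no proof of it, and explicitly treats it as part of the open conjectural edifice (citing \cite[\S2]{DGKM2} for its formulation and only verifying the functional equation numerically in examples). Your proposal, to its credit, does not claim to prove it either --- it is a reduction strategy, and you correctly identify that the endpoint (Hasse--Weil for abelian varieties over number fields, i.e.\ automorphy) is itself open. So there is nothing in the paper to compare your argument against, and the honest verdict is that neither you nor the paper proves the statement.

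That said, one step of your reduction is genuinely broken even as a conditional argument. After Brauer induction you arrive at a product $L(A_{\hat\chi}/K,s)=\prod_{\sigma}L(C^{\sigma(\chi)},s)$, and you assert that granting meromorphic continuation of the left-hand side, ``a standard Galois-averaging/projector argument'' extracts meromorphic continuation of each factor. No such argument exists: Galois conjugacy of Euler factors is an algebraic relation among Dirichlet coefficients, and there is no analytic mechanism that upgrades meromorphic continuation of a product of Euler products to continuation of the individual factors. (If there were, continuation of $\zeta_F(s)$ would immediately yield continuation of all Artin $L$-functions of $F/\QQ$, which is false as an argument --- that continuation requires Brauer induction down to \emph{abelian} characters plus Hecke's theory, not factor-extraction.) The correct conditional input is not ``Hasse--Weil for $A_{\hat\chi}$'' but the stronger statement that the compatible system $H^1_\ell(C^\chi)$ with its $\QQ(\chi)$-coefficients is itself automorphic, i.e.\ that $A_{\hat\chi}$ is of $\GL_2$-type over $\QQ$ (or attached to an automorphic representation over $K$ respecting the $\QQ(\chi)$-action), which is exactly what makes Proposition \ref{prop:mod_form_intro} work and exactly what is unavailable in general. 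A similar caveat applies to your claim that the conductor and gamma factor ``behave multiplicatively'' across Brauer induction: inductivity of conductors (the conductor-discriminant formalism for these motives) and of the archimedean factors \eqref{eqn:GammaFactor} needs to be checked against Remark \ref{rmk:GammaCond}, where restriction of scalars contributes discriminant terms, rather than asserted.
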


\begin{remark}
    See \cite[\S2]{DGKM2} for a discussion of the quantity $w(C^\tau)$, omitted from the present work because root numbers are not required in the Magma implementation for $L$-function computations.
\end{remark}

\begin{remark}
    In this section we have held our focus on $\ell$-adic cohomology, but we note that we can and will associate to $C^\tau$ other cohomology theories in much the same way.
\end{remark}

\section{The algorithm}\label{sec:algo}
We now describe an algorithm for computing the Euler factors at good primes of $L(C^\tau,s)$. In light of \eqref{eqn:trace}, the important step is computing the traces $\Tr(\Frob_\pp\cdot g\mid H^1_\ell(C^\tau)^{I_\pp})$ for $g\in G$; the novel aspect of our algorithm is the construction of auxiliary curves, point counts on which will yield such traces. The advantage of reducing the problem to point counts on curves---as opposed to relying, for example, on Gr{\"o}bner basis techniques to count fixed points---is that there are many efficient algorithms implemented in Magma for these computations.

\begin{algorithm}\label{algo}{\hspace{2cm}}\\
\textbf{Input:} A (model of a) curve $C$ with an action by automorphisms of $G$, a representation $\tau$ of $G$, and a prime $\pp$ of good reduction with $\pp\nmid|G|$. \\
\textbf{Output:} The local polynomial of $L(C^\tau,s)$ at $\pp$.
\begin{enumerate}
    \item For each $g\in G$ compute $\Tr((\Frob_\pp\cdot g)^i \mid H^1_\ell(C))$ for $i\in\{1,\hdots,g(C)\}$. We explain how to do this with $i=1$, the rest is the same after base change: \begin{enumerate}
        \item Obtain a model for the quotient curve $C_g:=C/\langle g\rangle$ over $k_\pp$.
        \item Construct the following $C_d\times C_d$-diagram of function fields, where we identify $k_\pp$ with $\FF_q$ and write $d$ for the order of $g$:
        \begin{equation}\label{eqn:curve_diagram}\begin{tikzcd}
	& {\FF_{q^d}(C)} \\
	{\FF_q(C)} && {\FF_{q^d}(C_g)} \\
	& {\FF_q(C_g)}
	\arrow["{\langle\Phi\rangle}"{description}, no head, from=1-2, to=2-1]
	\arrow["{\langle g\rangle}"{description}, no head, from=1-2, to=2-3]
	\arrow["{\langle g\rangle\times \langle\Phi\rangle}"{description}, no head, from=1-2, to=3-2]
	\arrow["{\langle g\rangle}"{description}, no head, from=2-1, to=3-2]
	\arrow["{\langle\Phi\rangle}"{description}, no head, from=2-3, to=3-2]
\end{tikzcd}\end{equation}
    Write $\tilde{C}_g$ for the quotient curve $C_{\FF_{q^d}}/\langle g\cdot \Phi\rangle$, which is defined over $\FF_q$. By Lemma \ref{lem:aux_curve} below, we have
    \begin{equation}\label{eqn:aux_trace}\tag{$\dagger$}\Tr(\Frob_\pp\cdot g\mid H^1_\ell(C)) = 1+q-\#(\tilde{C}_g(\FF_q)),\end{equation} so it suffices to obtain equations for this curve; even if we find a singular curve birational to $\tilde{C}_g$, we can adjust this point count for singularities. An equation can be found as follows:
    \begin{enumerate}
        \item Write $r$ for the minimum degree of an extension of $\FF_q$ which contains the $d$-th roots of unity and base change all curves to a degree $r$ extension.
        \item Write $\FF_{q^r}(C)/\FF_{q^r}(C_g)$ as a Kummer extension with minimal polynomial $X^d-R$.
        \item Compute the function field $\FF_{q^r}(C_g)[X]/(uX^d - R)$ of $(\tilde{C}_g)_{\FF_{q^r}}$, where $u$ is as in Lemma \ref{lem:aux_eqn} below.
        \item By taking the trace of a $d$-th root of $R/u$ and computing the minimal polynomial thereof, obtain an equation for $\tilde{C}_g$ over $\FF_q$.
    \end{enumerate}
    \end{enumerate}
    \item Compute $\Tr(\Frob_\pp^i\mid H^1_\ell(C^\tau))$ for each $i$ using \eqref{eqn:trace}.
    \item Output $\det(1-\Frob_\pp^{-1} T \mid H^1_\ell(C^\tau))$, which can now be computed via Newton's identities.
\end{enumerate}
\end{algorithm}

\begin{remark}
    The requirement that $\pp$ be of good reduction is necessary, although the method described above would compute the correct factor of the $L$-polynomial of the smooth normalisation of the special fibre of a regular model of $C_{K_\pp}$ at a bad prime $\pp$. To extract the full Euler factor, one must compute the action of $G$ on the special fibre---in practice either the conductor exponent is small and this factor suffices, or it is easier to guess the Euler factor.
\end{remark}

\begin{remark}
    To extend Algorithm \ref{algo} to primes $\pp\mid |G|$, one needs to work with Artin--Schreier theory instead of Kummer theory. In principle this is not a problem, but it is harder to extract equations for the auxiliary curves as in step (1)(b)(iii). In practice, the order of $G$ will have few divisors and again it will be easier to guess Euler factors at these primes.
\end{remark}

It remains to justify \eqref{eqn:aux_trace} and step (1)(b)(iii), which we do with the following lemmata:

\begin{lemma}\label{lem:aux_curve}
    With the set-up as in Algorithm \ref{algo}, equation \eqref{eqn:aux_trace} holds. That is,
    \[\Tr(\Frob_\pp\cdot g\mid H^1_\ell(C)) = 1+q-\#\tilde{C}_g(\FF_q).\]
\end{lemma}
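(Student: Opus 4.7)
The plan is to combine the Lefschetz trace formula with a description of $\tilde{C}_g(\FF_q)$ as the set of $\bar{\FF_q}$-points of $C$ fixed by the ``twisted Frobenius'' $g\circ\Phi$.

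First I would establish the identification $\tilde{C}_g(\FF_q)=\mathrm{Fix}(g\Phi\mid C(\bar{\FF_q}))$, where $g\Phi$ acts on $C(\bar{\FF_q})$ as $P\mapsto g(\Phi(P))$. The key point is that $\tilde{C}_g$ is an $\FF_q$-form of $C$ obtained by twisting by the $1$-cocycle $\Phi\mapsto g$ in $Z^1(\Gal(\FF_{q^d}/\FF_q),\Aut(C_{\FF_{q^d}}))$: after base-change to $\FF_{q^d}$ one has $C_{\FF_{q^d}}\otimes_{\FF_q}\FF_{q^d}\cong\bigsqcup_d C_{\FF_{q^d}}$, and $\langle g\Phi\rangle$ acts on this freely by cyclically permuting copies (via $\Phi$) while acting by $g$ on each, so that the quotient recovers a single copy of $C_{\FF_{q^d}}$. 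This exhibits $(\tilde{C}_g)_{\FF_{q^d}}\cong C_{\FF_{q^d}}$, and tracing through Galois descent identifies the $\FF_q$-rational points of the twist with the $g\Phi$-fixed points of $C(\bar{\FF_q})$.

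Next I would apply the Lefschetz trace formula to the endomorphism $g\circ\Phi$ of $C_{\bar{\FF_q}}$. Because Frobenius has trivial differential, every fixed point of $g\Phi$ is transversal, so
\[\#\mathrm{Fix}(g\Phi)=1-\Tr(g\Phi\mid H^1_\ell(C))+q,\]
using that $g$ acts trivially on $H^0$ and $H^2$ while $\Phi$ acts as $1$ and $q$ on these respectively. Combining with the identification from the first step yields the claim.

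The hard part will be carefully pinning down the various conventions---which Frobenius (arithmetic vs.\ geometric), in which direction the twist is taken, and what ``$\Phi$'' means on the various $\FF_q$-schemes and $\bar{\FF_q}$-point sets involved---to ensure that the composition $g\Phi$ acting on $C(\bar{\FF_q})$ in step one is the same operator whose trace appears on the left-hand side of \eqref{eqn:aux_trace}.
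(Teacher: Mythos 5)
Your proof is correct and follows essentially the same route as the paper: the Lefschetz trace formula applied to $g\circ\Phi$, combined with the identification of $\tilde{C}_g(\FF_q)$ with the $g\Phi$-fixed points of $C(\bar{\FF}_q)$. The paper justifies that identification by noting that $\FF_{q^d}(C)/\FF_q(\tilde{C}_g)$ is a constant field extension, so that $g\Phi$ is the Frobenius endomorphism of $\tilde{C}_g$; your cocycle-twist/Galois-descent argument is the same fact in different language, and you rightly flag the arithmetic-versus-geometric Frobenius conventions as the only point needing care (a point the paper's proof also leaves implicit).
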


\begin{proof}
    By Lefschetz's trace formula, it suffices to show that the number of fixed points of $g\cdot\Phi$ acting on $C_{\FF_{q^d}}$ is equal to $\#\tilde{C}_g(\FF_q)$. This holds because $\FF_{q^d}(C)/\FF_q(\tilde{C}_g)$ is an extension of the constant field; $g\cdot\Phi$ is the Frobenius endomorphism of $\tilde{C}_g$.
\end{proof}

\begin{lemma}\label{lem:aux_eqn}
    Keep the set-up as in Algorithm \ref{algo}, recalling in particular that $\pp\nmid |G|$ is coprime to $d$. Firstly, write $K(\zeta_d)(C)/K(\zeta_d)(C_g)$ as a Kummer extension with minimal polynomial $X_0^d-R_0$ so that $\FF_{q^r}(C)/\FF_{q^r}(C_g)$ is given by a Kummer extension with minimal polynomial $X^d-R$. Then $\FF_{q^r}(\tilde{C}_g)/\FF_{q^r}(C_g)$ can be written as a Kummer extension with minimal polynomial 
    \[uX^d-R, \hspace{0.5cm}\text{where}\hspace{0.5cm} u^{(q^r-1)/d}=z^{-1}.\]
    Here $z$ is a $d$-th root of unity constructed in the following way: choose a prime $\PP$ of $K(\zeta_d)$ above $\pp$, identifying the residue field of $\PP$ with $\FF_{q^r}$, and set $z$ to be the reduction modulo $\PP$ of $g(X_0)/X_0$.
\end{lemma}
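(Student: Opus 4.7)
The approach is to identify $\tilde{L}_r/K_r$ as a Kummer extension via an explicit construction of its Kummer generator inside a common overfield, and then to extract $u$. Since $\tilde{L}/K$ is cyclic Galois of degree $d$ (as will be verified below) and $\mu_d \subseteq \FF_{q^r}$ by the choice of $r$, the extension $\tilde{L}_r/K_r$ is Kummer, and the task reduces to determining its Kummer class $[\tilde R] \in K_r^\times/(K_r^\times)^d$ relative to $[R]$.

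To set up, let $m=\mathrm{lcm}(r,d)$ and pass to the common overfield $F=\FF_{q^m}(C)$. Identify $\Gal(F/K)$ with $\langle g\rangle\times\langle\Phi_m\rangle$, where $\Phi_m$ generates $\Gal(\FF_{q^m}/\FF_q)$; mirroring the construction of $\tilde C_g$ as $C_{\FF_{q^d}}/\langle g\cdot\Phi\rangle$, one checks that $\tilde L = F^{\langle g\Phi_m\rangle}$ and hence $\tilde L_r = F^{\langle g^r\Phi_m^r\rangle}$. A short Galois computation then confirms that $\tilde L/K$ is cyclic of degree $d$. Meanwhile $L_r = F^{\langle\Phi_m^r\rangle}$, so the Kummer generator $X$ (satisfying $X^d=R$) is fixed by $\Phi_m^r$ and satisfies $g(X)=zX$, with $z$ being the reduction of $g(X_0)/X_0$ modulo $\PP$.

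The plan is then to seek a Kummer generator of $\tilde L_r/K_r$ in the form $\tilde X = wX$ for a scalar $w\in\FF_{q^m}^\times$. Since $\Phi_m^r(X)=X$ and $g^r(X)=z^rX$, the condition that $\tilde X$ be fixed by $g^r\Phi_m^r$ reduces to a single relation of the form $w^{q^r-1}=z^{\pm r}$, whose precise sign depends on the conventions chosen for the Kummer character. From this relation one verifies that $w^d$ lies in $\FF_{q^r}^\times$ (since $\Phi_m^r(w^d)=w^d$), so setting $u=w^{-d}\in\FF_{q^r}^\times$ gives $\tilde X^d = R/u$; raising the relation on $w$ to the appropriate power and simplifying then yields the desired identity $u^{(q^r-1)/d}=z^{-1}$.

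The main subtlety will be in tracking conventions throughout: matching the residue-field identification of $\mu_d \subset K(\zeta_d)$ with $\mu_d \subset \FF_{q^r}$ consistently with the Galois actions, and aligning the isomorphism $\FF_{q^r}^\times/(\FF_{q^r}^\times)^d \cong \mu_d$ (given by $u\mapsto u^{(q^r-1)/d}$) with $z$. Once these conventions are pinned down, the computation itself is essentially a direct verification.
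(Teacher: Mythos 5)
Your route is genuinely different from the paper's: you identify $\FF_{q^r}(\tilde C_g)$ as a fixed field inside a large constant-field extension and hunt for a Kummer generator of the form $wX$, whereas the paper argues directly with the defining equations of the fixed-point locus of $g\cdot\Phi$ (solving $z\cdot X^{q^r}=X$, parametrising the solutions as $u_0\cdot\FF_{q^r}$ with $u_0^{q^r-1}=z^{-1}$, and substituting into $X^d=R$ to read off $u=u_0^d$). Your Galois-theoretic framing is in principle cleaner, but as written it does not establish the stated formula.

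The gap is in the final ``simplifying'' step. From your own relations one gets a definite answer: $(g^r\Phi_m^r)(wX)=w^{q^r}z^{r}X$ forces $w^{q^r-1}=z^{-r}$ (or $z^{r}$ under the opposite convention), and $u=w^{-d}$ then gives $u^{(q^r-1)/d}=w^{-(q^r-1)}=z^{\pm r}$ --- not $z^{-1}$. No bookkeeping of conventions repairs this: $z$ is a primitive $d$-th root of unity, so $z^{\pm r}=z^{-1}$ only when $d\mid r\mp 1$, which fails in cases relevant to the paper (e.g.\ $d=5$ and $q\equiv -1\bmod 5$, where $r=2$). The source of the discrepancy is which twist is being computed: your subgroup $\langle g^r\Phi_m^r\rangle=\langle (g\Phi_m)^r\rangle$ cuts out the curve whose $\FF_{q^r}$-points are the fixed points of $(g\cdot\Frob_q)^r=g^r\cdot\Frob_{q^r}$, while the paper's proof computes the locus fixed by $g\cdot\Frob_{q^r}$ (the subgroup $\langle g\Phi_m^r\rangle$), and it is the latter that yields the exponent $-1$. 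These are distinct twists of $C_{\FF_{q^r}}$ whenever $r\not\equiv\pm1\bmod d$, since the map $u\mapsto u^{(q^r-1)/d}$ identifies $\FF_{q^r}^\times/(\FF_{q^r}^\times)^d$ with $\mu_d$. To complete your argument you must either justify replacing $\langle g^r\Phi_m^r\rangle$ by $\langle g\Phi_m^r\rangle$ (i.e.\ explain in what sense the lemma's $\FF_{q^r}(\tilde C_g)$ is the $\FF_{q^r}$-twist attached to the cocycle $\Frob_{q^r}\mapsto g$ rather than the base change of the $\FF_q$-curve $\tilde C_g$), or accept that your computation proves a formula with exponent $\pm r$; asserting that the two coincide ``after simplification'' is not a proof.
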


\begin{proof}
    Suppose without loss of generality that $C_g$ is given by a (possibly singular) affine equation $f(Y,Z)=0$ over $\FF_q$. The fixed points of $g\cdot\Phi$ acting on $C_{\FF_{q^r}}$ are those with $(z\cdot X^{q^r},Y^{q^r},Z^{q^r})=(X,Y,Z)$, i.e.\ $Y,Z\in\FF_{q^r}$ and $X$ satisfying $z\cdot X^{q^r}-X=0$. The solutions to this equation in $X$ lie in $\FF_{q^{dr}}$; indeed if $u_0$ is a solution to $X^{q^r-1}=z^{-1}$, then the full set of solutions is $u_0\cdot \FF_{q^r}$.
    
    Substituting into the equations $\{X^d-R,f(Y,Z)=0\}$ for $C_{\FF_{q^r}}$, we obtain equations $\{u X^d-R, f(Y,Z)=0\}$ for $(\tilde{C}_g)_{\FF_{q^r}}$, noting that $u=u_0^d\in \FF_{q^r}$.
\end{proof}

\begin{remark}
    Our implementations of Algorithm \ref{algo} are in Magma, within which one can work with general motivic $L$-functions using their implementation of a version of T. Dokchitser's $L$-function calculator. This works by estimating the error introduced by truncating the Dirichlet series for a given $L$-function at $N\in\NN$. For a set precision, one can ask Magma for an approximation of the number of coefficients needed for accurate calculation using the command \texttt{LCfRequired}, and so in practice it suffices to work with truncated local polynomials rather than computing the whole thing as described above.
\end{remark}

\section{Endomorphisms of Jacobians}
Here we recap some background on endomorphisms of Jacobians, with the point being that the framework of motivic pieces of curves---under certain conditions---can be used to describe factorisations of $L$-functions of curves whose Jacobians have extra \textit{endomorphisms}, rather than automorphisms.

\subsection{Endomorphisms of Hecke type}\label{sec:HeckeType}
In \cite{Ellenberg01}, Ellenberg introduces the notion of endomorphism algebras of Hecke type in order to construct families of curves whose Jacobians have real multiplication, i.e.\ curves $C$ of genus $g$ for which there is a totally real field $F$ of degree $g$ and an injection $F\hookrightarrow \End(\Jac(C))\otimes_\ZZ\QQ$.

\begin{definition}\label{def:HeckeType}
    Consider a curve $C$ over $k$ equipped with a cover $\pi\colon C\to B$ of another curve. Say the Galois closure of the field extension $k(C)/k(B)$ has Galois group $G$. This closure corresponds to a curve $D$, which admits an action by $G$ of automorphisms. Write $H\le G$ for the subgroup for which $C = D/H$. There is a map $\QQ[H\backslash G/H]\hookrightarrow \End(\Jac(C))\otimes_\ZZ\QQ$---say the image is an \textit{endomorphism algebra of Hecke type} and that a curve $C$ has extra endomorphisms of Hecke type if there exists a cover $\pi$ such that this image is non-trivial.
\end{definition}

\begin{remark}
    Given a curve $C$ whose Jacobian has extra endomorphisms of Hecke type, we obtain a factorisation of $L(C,s)$ corresponding to the factorisation of $L(D,s)$ under the action of $G$, in the notation of Definition \ref{def:HeckeType}.
\end{remark}

\begin{remark}
    Ellenberg notes that quotients of $\QQ[H\backslash G/H]$ are central simple algebras over abelian number fields and so, in particular, not all non-trivial endomorphism algebras can arise in this way. Whether there are other obstructions to endomorphism algebras being of Hecke type remains an open question.
\end{remark}

\subsection{{$\GL_2$}-type and modularity}\label{sec:GL2}
We will also be interested in Jacobians of $\GL_2$-type. Being of $\GL_2$-type is a condition on the endomorphism algebra, which determines precisely when an $L$-function of an abelian variety decomposes as a product of $L$-functions of modular forms.

\begin{definition}
    An abelian variety $A/K$ of dimension $g$ is of $\GL_2$-type if $V_\ell(A)$ is a $G_K$-representation taking values in $\GL(2,k\otimes\QQ_\ell)$, where $k\hookrightarrow \End(A)\otimes_\ZZ \QQ$ is a number field of degree $g$.
\end{definition}

\begin{theorem}\label{thm:GL2}
    Given a simple abelian variety $A$ over $\QQ$, there exists $N\in\NN$ such that $A$ is a quotient of $J_1(N)$ if and only if $A$ is of $\GL_2$-type.
\end{theorem}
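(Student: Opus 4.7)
The plan is to prove each direction separately, with the forward direction being essentially classical and the converse invoking deep results on modularity of Galois representations.

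For the "if" direction, suppose $A$ is a quotient of $J_1(N)$. Then up to isogeny $A$ is cut out by a newform $f$ of weight $2$ and level dividing $N$, the simplicity of $A$ ensuring we land in a single newform eigenspace. Writing $K_f = \QQ(a_n(f) : n \geq 1)$ for the Hecke field of $f$, one has $[K_f : \QQ] = \dim A_f$ and a natural embedding $K_f \hookrightarrow \End(A_f) \otimes \QQ$. The Eichler--Shimura congruence relations identify $\Frob_p$ (for $p \nmid N\ell$) on the $\ell$-adic Tate module with the image of the Hecke operator $T_p$, and from this one deduces that $V_\ell(A_f)$ is free of rank $2$ over $K_f \otimes \QQ_\ell$, that is, $A \sim A_f$ is of $\GL_2$-type.

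For the converse, let $A$ be a simple $\GL_2$-type abelian variety over $\QQ$ with $k \hookrightarrow \End(A) \otimes \QQ$ of degree $g = \dim A$. For each prime $\lambda$ of $k$ above a rational prime $\ell$, the Tate module yields a two-dimensional representation
\[ \rho_{A,\lambda}\colon G_\QQ \to \GL_2(k_\lambda). \]
A Hodge-theoretic argument shows that $\rho_{A,\lambda}$ is odd (complex conjugation acts on $H^1_B(A, \QQ)$ with equal numbers of $\pm 1$ eigenvalues), and simplicity of $A$ combined with Faltings' theorem on endomorphisms of abelian varieties yields absolute irreducibility of $\rho_{A,\lambda}$ for almost all $\lambda$.

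The crux is then to invoke Serre's modularity conjecture, proved by Khare--Wintenberger: the residual representation $\bar\rho_{A,\lambda}$ at any $\lambda$ where it is absolutely irreducible is modular, of weight and level predicted by Serre's recipe. A suitable modularity-lifting theorem in the tradition of Wiles and Taylor--Wiles, extended by Diamond, Kisin, and others to handle the required ramification at primes dividing the conductor of $A$, then promotes this to modularity of $\rho_{A,\lambda}$ itself, so that $\rho_{A,\lambda} \cong \rho_{f,\lambda}$ for some newform $f$ of weight $2$ and some level $N$. Faltings' isogeny theorem then gives $A \sim A_f$, which by construction is a quotient of $J_1(N)$. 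The main obstacle is the invocation of Serre's conjecture; the remaining ingredients are standard structure theory of modular Jacobians and formal Galois-theoretic machinery for abelian varieties.
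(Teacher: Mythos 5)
Your proposal is correct in outline and rests on exactly the same key input as the paper, which disposes of the theorem in one sentence by citing Ribet's result that the statement follows from Serre's modularity conjecture together with the Khare--Wintenberger proof of that conjecture. Your ``if'' direction (simple quotients of $J_1(N)$ are the $A_f$ of Shimura, with $V_\ell(A_f)$ free of rank $2$ over $K_f\otimes\QQ_\ell$ via Eichler--Shimura) is the standard argument. One remark on the converse: the step where you promote residual modularity to modularity of $\rho_{A,\lambda}$ via a Taylor--Wiles-style lifting theorem is not how Ribet's argument actually runs, and as stated it glosses over the hypotheses such theorems require. Ribet instead observes that $\bar\rho_{A,\lambda}$ is irreducible, odd and modular of weight $2$ and level bounded independently of $\lambda$ for infinitely many $\lambda$; since there are only finitely many newforms of that weight and level, a pigeonhole argument produces a single newform $f$ with $\bar\rho_{A,\lambda}\cong\bar\rho_{f,\lambda}$ for infinitely many $\lambda$, whence $a_p(A)=a_p(f)$ for all good $p$ (the two algebraic numbers are congruent modulo infinitely many primes), and Faltings then gives $A\sim A_f$ without any lifting theorem. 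Your route can be made to work, but it imports heavier machinery and more verification than the argument the paper is citing; either way the essential content --- reduction to Serre's conjecture --- is the same.
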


\begin{proof}
    In \cite{Ribet2004} Ribet shows that this is a consequence of Serre's Conjecture, a proof of which is given in \cite{KWSerreConj}.
\end{proof}

\begin{remark}
    A simple abelian variety $A$ being a quotient of $J_1(N)$ means precisely that there exists a newform $f$ of conductor $N$ such that $L(A,s)=\prod_\tau L(f^\tau,s)$, where this product is over the Galois conjugates of $f$.
\end{remark}

\subsection{An example}\label{sec:dihedral}
It was pointed out to the author by T. Dokchitser that the two-parameter family
\[f_{t,y}(x)=x^5 + yx^4 + tx^3 + (-2t+y^2+3y+5)x^2 + (t+y)x + (y+3)\]
consists of polynomials with $D_{10}$-Galois group, where $D_{10}$ is the dihedral group of order $10$. The unique quadratic subfield of the Galois closure of $f_{t,y}(x)$ is generated by $\sqrt{-\delta_{t,y}}$, where
\begin{eqnarray}
    \delta_{t,y}=4y^5 + 31y^4 &+& (118-26t)y^3 + (245-122t-t^2)y^2\\
    &+&(350-270t+36t^2)y+(375-300t+80t^2+4t^3).
\end{eqnarray}
We write $f_t(x,y)=f_{t,y}(x)$ and consider this as a one-parameter family of genus $2$ curves $C_t: f_t(x,y)=0$, noting that a Weierstrass model is given by
\begin{equation}\label{eqn:W_model}
    C_t: \hspace{0.2cm} y^2 = x^6-2x^5+x^4+(6-4t)x^3+2x^2+4x+1.
\end{equation} 
We also write $\delta_t(y)=\delta_{t,y}$. We obtain the following $G=D_{10}$-diagram of curves:
\[\begin{tikzcd}
& {X_t:f_t(x,y)=\Delta^2 + \delta_t(y) = 0} \\
{C_t: f_t(x,y)=0} \\
&& {D_t: \Delta^2 +\delta_t(y)=0 } \\
& {\mathbb{P}^1_y}
\arrow[from=1-2, to=2-1]
\arrow[from=1-2, to=3-3]
\arrow[from=2-1, to=4-2]
\arrow[from=3-3, to=4-2]
\end{tikzcd}\]

Via \cite[Theorem A]{KaniRosen} and the existence of the Brauer relation $\one-2C_2-C_5+G$ in $G$, we obtain a factorisation $L(X_t,s)=L(C_t,s)^2\cdot L(D_t,s)$. By comparing with the decomposition under the action of $G$, writing $\rho$ and $\rho'$ for the conjugate two-dimensional irreducible representations, we find
\begin{equation}\label{eqn:Ct_fact}
    L(C_t,s)=L(X_t^\rho,s)\cdot L(X_t^{\rho'},s).
\end{equation}
Concomitantly, there is another decomposition of $L(C_t,s)$ afforded by Theorem \ref{thm:GL2}:

\begin{lemma}\label{lem:GL2}
    The $L$-function $L(C_t,s)$ factors as a product of $L$-functions of classical modular forms with $q$-expansion coefficients in $\QQ(\sqrt{5})$.
\end{lemma}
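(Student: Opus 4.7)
The plan is to combine the factorisation \eqref{eqn:Ct_fact} with the modularity criterion of Theorem \ref{thm:GL2} to identify each of $L(X_t^\rho,s)$ and $L(X_t^{\rho'},s)$ as the $L$-function of a classical newform.

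First I would observe that the character field of the two-dimensional irreducible representations of $D_{10}$ is $\QQ(\zeta_5+\zeta_5^{-1})=\QQ(\sqrt{5})$, so the abelian variety $A_{\hat\rho}$ of Remark \ref{rmk:l_func_decomp} is a $\QQ$-abelian variety carrying an embedding $\QQ(\sqrt{5})\hookrightarrow\End(A_{\hat\rho})\otimes\QQ$. Since $X_t/G\cong\mathbb{P}^1_y$ has trivial $H^1$, the trivial $G$-isotypic component of $H^1_\ell(X_t)$ vanishes; combined with the decomposition of the permutation representation $\QQ[G/C_2]=\one\oplus\rho\oplus\rho'$, this yields $\dim A_{\hat\rho}=2$. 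Because $L(A_{\hat\rho},s)=L(X_t^\rho,s)\cdot L(X_t^{\rho'},s)=L(C_t,s)$ by \eqref{eqn:Ct_fact}, Faltings' isogeny theorem forces $\Jac(C_t)\sim_{\QQ} A_{\hat\rho}$, so $\Jac(C_t)$ is an abelian surface of $\GL_2$-type with real multiplication by $\QQ(\sqrt{5})$.

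Assuming $\Jac(C_t)$ is $\QQ$-simple, Theorem \ref{thm:GL2} then furnishes a weight-$2$ newform $f$ with coefficient field $\QQ(\sqrt{5})$ and some level $N$ such that $L(C_t,s)=L(f,s)\cdot L(f^\sigma,s)$, where $\sigma$ generates $\Gal(\QQ(\sqrt{5})/\QQ)$; by Lemma \ref{lem:conj} these two factors must then match $\{L(X_t^\rho,s),L(X_t^{\rho'},s)\}$. The main obstacle is handling potential non-simplicity: if $\Jac(C_t)\sim_\QQ E_1\times E_2$ for elliptic curves $E_i/\QQ$, then $\End\otimes\QQ$ is either a product of fields of degree at most two (into which $\QQ(\sqrt{5})$ cannot embed as a field) or $M_2(K)$ for $K=\End(E_1)\otimes\QQ$; only the latter is compatible with real multiplication by $\QQ(\sqrt{5})$, forcing $E_1\sim_\QQ E_2=E$. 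By the modularity theorem of Wiles--Taylor et al., we have $L(E,s)=L(f_E,s)$ for a rational weight-$2$ newform $f_E$, and hence $L(C_t,s)=L(f_E,s)^2$, which is still a product of $L$-functions of classical modular forms with coefficients in $\QQ\subset\QQ(\sqrt{5})$. In either case the lemma follows.
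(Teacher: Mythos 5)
Your argument is correct, but it reaches the key fact --- that $\QQ(\sqrt{5})$ embeds into $\End(\Jac(C_t))\otimes\QQ$ --- by a genuinely different and heavier route than the paper. The paper simply exhibits the endomorphism: $\pi_*\circ g_*\circ\pi^*$, where $\pi\colon X_t\to C_t$ is the quotient map and $g$ has order $5$, acts on $\Jac(C_t)$ as $(1+\sqrt{5})/2$, so real multiplication is immediate and no appeal to \eqref{eqn:Ct_fact} or to Faltings is needed. You instead deduce the real multiplication from the character field of $\rho$, the identity $L(A_{\hat\rho},s)=L(C_t,s)$ and Faltings' isogeny theorem; this works (and your dimension count $\dim A_{\hat\rho}=2$ via the vanishing of the trivial isotypic part and $\QQ[G/C_2]=\one\oplus\rho\oplus\rho'$ is right), but it is worth noting that the Kani--Rosen relation already gives the isogeny $\Jac(X_t)\sim\Jac(C_t)^2\times\Jac(D_t)$ on the level of abelian varieties, so Faltings is a sledgehammer here. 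What your version buys is two things the paper leaves implicit: an explicit treatment of the case where $\Jac(C_t)$ fails to be $\QQ$-simple (needed before invoking Theorem \ref{thm:GL2}, which is stated for simple abelian varieties), and an identification of the modular factorisation with \eqref{eqn:Ct_fact} --- though the latter is really the content of Proposition \ref{prop:mod_form}, whose proof in the paper rests precisely on the fact that the endomorphism $\pi_*\circ g_*\circ\pi^*$ is of Hecke type, a feature your indirect construction does not make visible.
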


\begin{proof}
    It suffices to show that $\QQ(\sqrt{5})\hookrightarrow\End(\Jac(C_t))\otimes_\ZZ\QQ$, so that $\Jac(C_t)$ is of $\GL_2$-type. To see this, note that $\pi_*\circ g_*\circ \pi^*$ acts as $(1+\sqrt{5})/2$ on $\Jac(C_t)$, where $\pi: X_t \to C_t$ is the quotient map and $g$ is an automorphism of order $5$ acting on $X_t$.
\end{proof}

It turns out that the factorisation \eqref{eqn:Ct_fact} and the factorisation described in Lemma \ref{lem:GL2} are the same, the salient point being that the endomorphism in the proof of Lemma \ref{lem:GL2} is of Hecke type.

\begin{proposition}\label{prop:mod_form}
    $L(X_t^\rho,s)$ is the $L$-function of a classical newform.
\end{proposition}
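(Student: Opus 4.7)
The plan is to combine the motivic factorisation \eqref{eqn:Ct_fact} with Ribet's theorem (Theorem \ref{thm:GL2}), using the $\GL_2$-type structure on $\Jac(C_t)$ afforded by Lemma \ref{lem:GL2}. Concretely, the Hecke-type endomorphism $\pi_*\circ g_*\circ \pi^*$ realises an embedding $\QQ(\sqrt{5})\hookrightarrow\End(\Jac(C_t))\otimes\QQ$, making $\Jac(C_t)$ of $\GL_2$-type. Provided $\Jac(C_t)$ is $\QQ$-simple (discussed in the final paragraph), Theorem \ref{thm:GL2} then produces a newform $f$ with coefficient field $\QQ(\sqrt{5})$ and an equality $L(C_t,s)=L(f,s)\cdot L(f^\sigma,s)$, where $\sigma$ generates $\Gal(\QQ(\sqrt{5})/\QQ)$.

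The central step is to identify the factorisation produced by Ribet's theorem with \eqref{eqn:Ct_fact}. Both factorisations reflect a decomposition of $V_\ell(\Jac(C_t))$ as a rank-one module over $\QQ(\sqrt{5})\otimes\QQ_\ell$: the motivic one via the Hecke-type endomorphism, and the modular one via the Hecke algebra action on $J_1(N)$. When $\End(\Jac(C_t))\otimes\QQ=\QQ(\sqrt{5})$, any two embeddings of $\QQ(\sqrt{5})$ into this algebra differ by an element of $\Gal(\QQ(\sqrt{5})/\QQ)$, so the two decompositions agree up to swapping factors. Comparing the resulting Euler products then gives $L(X_t^\rho,s)=L(f,s)$, after possibly relabelling $\rho\leftrightarrow\rho'$, which is the desired statement.

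The main obstacle is ensuring $\QQ$-simplicity of $\Jac(C_t)$. A decomposition $\Jac(C_t)\sim E_1\times E_2$ with $E_1\not\sim E_2$ is incompatible with $\QQ(\sqrt{5})\subseteq\End(\Jac(C_t))\otimes\QQ$, since each $\End(E_i)\otimes\QQ$ is $\QQ$ or imaginary quadratic and their product cannot contain $\QQ(\sqrt{5})$. The case $E_1\sim E_2\sim E$ cannot be ruled out \emph{a priori}, but then $L(C_t,s)=L(E,s)^2$ and \eqref{eqn:Ct_fact} forces $L(X_t^\rho,s)=L(X_t^{\rho'},s)=L(E,s)$, which remains the $L$-function of a classical newform by modularity of elliptic curves over $\QQ$. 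For any specific $t$ of interest, simplicity can alternatively be verified by computing sufficiently many Fourier coefficients of $L(X_t^\rho,s)$ via Algorithm \ref{algo} and matching them against the finite list of candidate newforms of the conductor predicted by \eqref{eqn:GammaFactor}, giving an unconditional verification at that specialisation.
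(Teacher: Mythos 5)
Your proof is correct and takes essentially the same route as the paper: both identify the newform factorisation supplied by Lemma \ref{lem:GL2} and Theorem \ref{thm:GL2} with the motivic factorisation \eqref{eqn:Ct_fact}, via the observation that each reflects the same $\QQ(\sqrt{5})\otimes\QQ_\ell$-module structure on $V_\ell(\Jac(C_t))$ (the paper phrases this through Remark \ref{rmk:l_func_decomp} with $A_{\hat\rho}=\Jac(C_t)$). Your additional paragraph on $\QQ$-simplicity addresses a hypothesis of Theorem \ref{thm:GL2} that the paper's proof passes over silently, and your handling of the $E_1\times E_2$ and $E^2$ degenerations is a worthwhile refinement rather than a different argument.
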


\begin{proof}
    Firstly, the decomposition of $L(C_t,s)$ into $L$-functions of modular forms is precisely that coming from the action of $\QQ(\sqrt{5})\hookrightarrow\End(\Jac(C_t))\otimes\QQ$. By Remark \ref{rmk:l_func_decomp}, in the notation of which we have $A_{\hat\rho}=\Jac(C_t)$, this is the same decomposition coming from the action of $D_{10}$ on $X_t$.
\end{proof}

\begin{example}\label{ex:X0(23)}
    The curve $C_2$ as in \eqref{eqn:W_model} has LMFDB (\cite{lmfdb}) label \href{https://www.lmfdb.org/Genus2Curve/Q/529/a/529/1}{\texttt{529.a.529.1}}, from which we see that its Jacobian is isogenous to that of the modular curve $X_0(23)$. We know that $L(X_0(23),s)$ factors as the product of the $L$-functions of the two newforms of level $23$ and weight $2$, whose $L$-functions can also be found on the LMFDB or via Magma's in-built $L$-series. We compute the Euler factors of $L(X_2^\tau,s)$ and $L(X_2^{\tau'},s)$ for each prime up to $10,000$ and confirm that these match with those stored in Magma (see \cite[\texttt{Dihedral.m}]{MotivicPiecesCode}). In light of Proposition \ref{prop:mod_form}, this provides a helpful sanity check for the validity of Algorithm \ref{algo}. 
\end{example}

\begin{remark}
    Example \ref{ex:X0(23)} is the only instance discussed herein where we must implement an instance of explicit Kummer theory as in step 1(b)(ii) of Algorithm \ref{algo}. To do this, we use the results of \cite{DummitQuintics} and an adaptation of the associated Mathematica \cite{Mathematica} script available at \cite{QuinticsCode}.
\end{remark}

In general, then, the algorithm we will describe in \S\ref{sec:algo} gives a method to explicitly recover the factorisation of a Jacobian of $\GL_2$-type whose endomorphisms are of Hecke type into a product of $L$-functions of newforms, provided that one can exhibit explicit covers as in Definition \ref{def:HeckeType}.

\section{Periods}\label{sec:periods}
In this section we discuss the conjectural link between periods of motives and $L$-values, before explaining theoretically how to compute periods of motivic pieces in general and how to practically do so for pieces of superelliptic curves in particular.

\subsection{Motives}\label{sec:motives}
We begin with a brief discussion of motives from the point of view of realisation data, which sets us up for the statement of Deligne's Period Conjecture.

Let $K$ and $F$ be number fields. For the purposes of this section, we view these abstractly, rather than having in mind an embedding into $\CC$. For us, a $K$-motive $M$ with coefficients in $F$ of dimension $d$ and weight $w$ is a collection of \textit{realisation data} $\{H_B(M),H_{\dR}(M),(H_\lambda(M))_{\lambda}, c_{B,\dR}, c_{\ell,B}, c_{\ell,\dR}\}$ as follows (e.g.\ \cite[\S2.2--2.3]{EvansThesis}):

\begin{itemize}
    \item The \textit{Betti realisation}, $H_B(M)$, is a $d$-dimensional $F$-vector space with Hodge filtration by free $F\otimes\CC$-modules \[H_B(M)\otimes \CC =\bigoplus_{i+j=w}H^{(i,j)}(M),\]
    equipped with an action of complex conjugation exchanging $H^{(i,j)}(M)$ and $H^{(j,i)}(M)$ for each real place of $K$.
    \item The \textit{de Rham realisation}, $H_{\dR}(M)$, is a $d$-dimensional $F$-vector space with an exhaustive decreasing filtration \[\{\Fil^kH_\text{dR}(M)\}_{k\in\ZZ}.\] 
    \item The \textit{$\lambda$-adic realisations}, $H_\lambda(M)$, indexed by proper prime ideals $\lambda$ of $F$, are $d$-dimensional $F_\lambda$-vector spaces equipped with a continuous homomorphism
    \[ \rho_\lambda: G_K \to \GL(H_\lambda(M)),\]
    and, for each prime $\pp$ of $K$ coprime to $\lambda$ of $F$, a local polynomial
    \[ L_\pp(M,T) = \det(1-T\Frob_\pp^{-1} \midx H_\lambda(M)^{I_\pp})\in F[T]\]
    independent of $\lambda$. There is a finite set $S=S(M)$ of primes $\pp$ of $K$ such that $\rho_\lambda(I_\pp)=1$ for $\pp\not\in S$, and that for $\pp\not\in S$, fixing an embedding $F\hookrightarrow\CC$, the complex roots of $L_\pp(M,T)$ have absolute value $N(\pp)^{-w/2}$.
    \item $c_{B,\dR}$ is an $F\otimes_\QQ\CC$-module isomorphism $H_B(M)\otimes_\QQ \CC\xrightarrow{\sim} H_{\dR}(M)\otimes_\QQ \CC$ respecting the action of complex conjugation and such that, for all $k$,
    \[ c_{B,\dR}\left(\bigoplus_{i+j=w,\hspace{0.1cm}i\ge k}H^{(i,j)}(M)\right)=\Fil^k H_{\dR}(M)\otimes_\QQ\CC. \]
    \item $c_{\ell,B}$ and $c_{\ell,\dR}$ are comparison isomorphisms relating $\oplus_{\lambda\mid\ell}H_\lambda(M)$ to $H_B$ and $H_{\dR}$, respectively. We omit explicit descriptions as we will not make use of these.
\end{itemize}

To the above data, we also associate an $L$-function.

\begin{definition}\label{defn:motLfn}
    Given a $K$-motive $M$ with coefficients in $F$, its $L$-function (or tuple of $L$-functions), converging in the right half-plane $\Re(s)>1+w(M)/2$
    \[ L(M,s)=(L(\sigma,M,s))_{\sigma\in\Sigma_F},\]
    where
    \[L(\sigma,M,s) = \prod_{(0)\ne\pp\triangleleftneq\OO_K} L_\pp^\sigma(M,N(\pp)^{-s})^{-1}\]
    and $L_\pp^\sigma$ is the polynomial over $\CC$ obtained by embedding the coefficients of $L_\pp$ via $\sigma$.
\end{definition}

\begin{remark}
    When we already have in mind an embedding $\sigma\colon F\hookrightarrow\CC$, we may write $L(M,s)$ for $L(\sigma,M,s)$.
\end{remark}

We note the existence of the \textit{Tate motive}, twisting a motive $M$ by which corresponds to a shift on the level of $L$-functions.

\begin{example}
    The Tate motive, $\QQ(1)$, is a $\QQ$-motive with coefficients in $\QQ$ of dimension $1$ and weight $-2$. Given a motive $M$, we obtain the $n$-th Tate twist $M(n)$ by tensoring with $\QQ(1)^{\otimes n}$ on the level of realisation data. The effect of this on the $L$-function is simply
    \[ L(M(n),s)=L(M,s+n).\] 
\end{example}

For this article, the example to have in mind is the motive associated to the first cohomologies of an abelian variety, whose $L$-function is the usual $L$-function associated to an abelian variety.

\begin{example}\label{ex:AbVarResData}
    Associated to the first cohomologies of a $g$-dimensional abelian variety $A$ over $K$ is a $K$-motive with coefficients in $\QQ$ of dimension $2g$ and weight $1$, denoted $h^1(A)$. The Betti and de Rham realisations are the first singular and de Rham cohomologies of $A$, respectively, and the $\ell$-adic realisation is the first {\'e}tale cohomology, or $(T_\ell(A)\otimes_{\ZZ_{\ell}}\QQ_\ell)^*$. Recall that BSD concerns the value $L(A,1)$, so it is pertinent to note that $h^1(A)(1)$ is isomorphic to the dual of $h^1(A)$.
\end{example}

\begin{remark}
    In the case of the Jacobian of a curve with an action by $G$ of automorphisms, decomposing the data of Example \ref{ex:AbVarResData} under the inherited $G$-action (as in Definition \ref{defn:motivic_piece}) gives us the realisation data associated to the motivic pieces $C^\tau$. Then $L(C^\tau,s)$ as in Definition \ref{defn:motLfn} agrees with the $L$-function of Definition \ref{defn:l_func}.
\end{remark}

\subsection{Deligne's Period Conjecture}

\begin{definition}\label{defn:del_period}
    Consider a $\QQ$-motive $M$ with coefficients in $F$ and odd weight $w$. Let $H_B(M)^+$ be the $+1$-eigenspace of complex conjugation, and define
    \[\alpha_{M}^+: H_B^+(M)\otimes\CC \hookrightarrow H_B(M)\otimes\CC \xrightarrow{c_{B,\dR}} H_{\text{dR}}(M)\otimes\CC\twoheadrightarrow H_\text{dR}^+(M)\otimes\CC,\]
    where the last map is the natural quotient map onto 
    \[H^+_{\dR}(M)=H_{\dR}(M)/\Fil^{1+\lfloor w(M)/2\rfloor}H_{\dR}(M).\] 
    In the case that $\alpha_M^+$ is an isomorphism, denote by $c^+(M,0)\in(F\otimes_\QQ \CC)^\times$ a representative of the class of $\det(\alpha^+_M)$ in $(F\otimes_\QQ \CC)^\times/F^\times$, computed with respect to $F$-bases of $H_B$ and $H_{\dR}$. We view this as a tuple $(c^+(\sigma,M,0))_{\sigma\in\Sigma_F}\in(\CC^\times/F^\times)^{\Sigma_F}$ indexed by the embeddings of $F$. Moreover, we write $c^+(M,s_0)$ for $c^+(M(s_0),0)$ and call this the \textit{Deligne period} of $M$ at $s_0$. 
\end{definition}

\begin{remark}
    There is an analogous definition in the case that $w$ is even, but for our purposes this is not required.
\end{remark}

As mentioned above, Deligne conjectures that these periods describe the `irrational parts' of $L$-values. To state this, we require one further definition.

\begin{definition}[Critical value]
    Given a $K$-motive $M$ with coefficients in $F$, say $s_0$ is a critical value for $M$ if neither $L_\infty(M,s)$ nor $L_\infty(M^*(1),s)$ has a pole at $s_0$.
\end{definition}

\begin{conj}[{= \cite[Conjecture 2.8]{Deligne79}}]\label{conj:Deligne}
    Let $M$ be a $\QQ$-motive with coefficients in $F$ and odd weight, and let $s_0\in\ZZ$ be a critical value for $M$. Then $\alpha_M^+$ is an isomorphism and there exists $\mathcal{L}(M,s_0)\in F$ such that 
    \begin{equation}\label{eqn:DelignePeriod}
        L(M,s_0)=c^+(M,s_0) \cdot \mathcal{L}(M,s_0).
    \end{equation}
\end{conj}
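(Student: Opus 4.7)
Conjecture \ref{conj:Deligne} is, of course, one of the central open problems in the arithmetic of $L$-functions, and no general proof is currently available; what I sketch below is therefore a strategy for reducing the statement to known instances and isolating the essential transcendental input. First, though, the claim that $\alpha_M^+$ is an isomorphism under the critical-value hypothesis is the tractable half of the statement. The idea is to translate the criticality condition on $s_0$ into a statement about Hodge numbers: for $s_0$ critical, neither $L_\infty(M,s)$ nor $L_\infty(M^*(1),s)$ has a pole at $s_0$, and a direct computation with $\Gamma$-factors forces $\dim_{F\otimes\CC} H_B^+(M(s_0))\otimes\CC = \dim_{F\otimes\CC} H_{\dR}^+(M(s_0))\otimes\CC$. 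The comparison isomorphism $c_{B,\dR}$ then makes $\alpha_M^+$ an isomorphism between spaces of equal dimension, with injectivity on graded pieces following from the compatibility of $c_{B,\dR}$ with the Hodge filtration. This is essentially linear algebra once the critical-value numerology is in place.

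The hard part, and the substance of the conjecture, is the rationality statement $L(M,s_0)/c^+(M,s_0)\in F$. The only viable general strategy, as in all known cases, is to find an automorphic realisation of $M$ (or more precisely of the particular $L$-value in question) and then to exploit rationality theorems for the resulting automorphic forms. Concretely: one would realise $L(M,s_0)$ as the value of an integral pairing one modular/automorphic form against another (a doubling or Rankin--Selberg integral, a Shimura-type integral, etc.), then invoke rationality of critical values of such integrals over the field of definition of the automorphic forms, and finally identify the resulting period with $c^+(M,s_0)$ via a comparison between the algebraic de Rham structure on the automorphic side and the motivic one. This is exactly the template underlying the proofs for Hecke $L$-functions of CM fields (Damerell, Shimura, Blasius), for symmetric powers of CM elliptic curves, and for various instances arising from Shimura varieties.

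For the motivic pieces $C^\tau$ of the present paper, specialising this plan means first exhibiting $C^\tau$ as a direct factor of the first cohomology of a Shimura-type variety and then tracking the period lattices through the idempotent cutting out $\tau$. For superelliptic curves this is plausible: the Jacobian of $y^n = f(x)$ is isogenous to a product of factors of Jacobians of Fermat-type covers, whose $L$-functions are built from Hecke Gr{\"o}ssencharacters, and the rationality input of Shimura/Blasius applies. The reduction would need to keep track of how the Deligne period decomposes under the $G$-action, using the naturality of $\alpha_M^+$ with respect to the idempotent decomposition of $H_B$ and $H_{\dR}$.

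The main obstacle, which is also the reason the present paper turns to numerical verification rather than proof, is precisely this automorphic realisation step: outside of the situations handled by Shimura's methods (and their modern extensions), there is no known way to access $L(C^\tau,s_0)$ other than through its Euler product, and no known way to compare the Deligne period $c^+(C^\tau,s_0)$ with a transcendental period computed on the automorphic side. In the absence of such a bridge, the most one can hope for short of a full proof is the kind of high-precision numerical verification carried out in \S\ref{sec:examples}, which the present paper adopts as compelling evidence for the conjecture in this previously-untested setting.
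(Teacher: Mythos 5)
This statement is a conjecture (Deligne's Period Conjecture, quoted from \cite{Deligne79}); the paper offers no proof of it, only high-precision numerical verification of special cases in \S\ref{sec:examples}. Your proposal correctly recognises this and does not purport to prove it, and your account of the two halves is accurate: for odd weight the claim that $\alpha_M^+$ is an isomorphism does reduce to the critical-value dimension count plus the observation that $F_\infty$ swaps $H^{(i,j)}$ and $H^{(j,i)}$ with no middle piece, while the rationality statement remains open outside the automorphic/CM settings you describe. So your treatment is consistent with the paper's.
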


\begin{remark}
    Conjecture \ref{conj:Deligne} depends implicitly on the conjectural analytic continuation of $L(M,s)$. We take this as a standing assumption, and in practice use Magma to numerically verify the functional equation before computing any $L$-values.
\end{remark}

Conjecture \ref{conj:Deligne} extends to motives over any number field via Weil restriction, under which $L$-functions are invariant.

\begin{example}\label{ex:AV_periods}
    Fix an abelian variety over a number field, $A/K$. (Alternatively, a curve $C/K$, seeing as $C$ and $\Jac(C)$ have the same first cohomologies.) BSD implicitly predicts
    \begin{equation}\label{eqn:BSD_period}
    \frac{L(A,1)}{|\Delta_K|^{\dim A/2}\prod_{\nu\mid\infty}|\Omega_{A^\nu}|} \in \QQ.
    \end{equation}
    Here, this product is over the infinite places of $K$ and $\Omega_{A^\nu}$ is defined as follows: fix a $\QQ$-basis $\mathcal{W}$ for the holomorphic differentials $H^0(C,\Omega^1_C)$ and, for a given $\nu$, choose a basis $\Gamma_\nu$ for the singular homology group $H_1(A^\nu(\CC),\ZZ)$ such that, if $\nu$ is a real place, then there exists some $\Gamma_\nu^+\subset\Gamma_\nu$ which is a basis for $H_1(A^\nu(\RR),\ZZ)$.
    \begin{equation}\label{eqn:BSD_period_defn}\Omega_{A^\nu} = \begin{cases}
        \det\left(\int_\gamma\omega\right)_{\omega\in\mathcal{W},\gamma\in\Gamma_\nu^+} & \text{ if }\nu\text{ is a real place} \\
        \det\left(\int_\gamma\omega\right)_{\omega\in\mathcal{W}\cup\bar{\mathcal{W}},\gamma\in\Gamma_\nu} & \text{ if }\nu\text{ is a complex place.}
    \end{cases}\end{equation}
    That this is consistent with Conjecture \ref{conj:Deligne} can be seen from the formulation of BSD for abelian varieties over number fields in \cite{Gross1982}, applying restriction of scalars and referring to \cite[\S4]{Deligne79}---the key observation being that $h^1(A)(1)$ is isomorphic to the dual of $h^1(A)$, as mentioned in Example \ref{ex:AbVarResData}.

    We give a brief explanation of the provenance of the period term in \eqref{eqn:BSD_period} from Deligne's perspective, focussing for simplicity on the case that $K$ is totally real. Firstly, we must pass to the restriction of scalars $R=\Res_{K/\QQ}A$, which is an abelian variety over $\QQ$. Then $H_B^+(h^1(R))$ has dual $H_1(R(\RR),\ZZ)$, which we identify with $\prod_{\nu\mid\infty} H_1(A^\nu(\RR),\ZZ)$, using $R(\RR)=A(K\otimes_\QQ\RR)\cong\prod_{\nu} A^\nu(\RR)$. We take a basis $\Gamma_\nu^+$ for each place as above. On the other hand, for a basis $\{k_i\}$ for $K/\QQ$, $H^+_{\dR}(h^1(R))=H^0(R,\Omega^1_R)$ has basis $\{ k_i\cdot \omega \mid \omega\in H^0(A,\Omega^1_A), i\in\{1,\hdots,[K:\QQ]\}\}$.
    
    Labelling the infinite places of $K$ as $\nu_j$, the period matrix of $R$ is now made up of $\dim A\times \dim A$ blocks with the $(i,j)$-th block given by
    \[ \left( \nu_j(k_i)\int_\gamma\omega \right)_{\omega\in\mathcal{W},\gamma\in\Gamma_j}=\nu_j(k_i)\cdot P_j,\hspace{0.2cm}\text{ where }\hspace{0.2cm} P_j = \left(\int_\gamma\omega\right)_{\omega\in\mathcal{W},\gamma\in\Gamma_j}, \]
    using the classical result that $\omega\mapsto[\gamma\mapsto \int_\gamma\omega]$ defines an isomorphism from de Rham cohomology to the dual of singular cohomology. In particular, this matrix is equal to
    \[ \left(\left(\nu_j(k_i)\right)_{i,j}\otimes I_{g_C}\right) \cdot \text{diag}(P_j)_j, \] 
    where $\otimes$ denotes the Kronecker product. The determinant of $\left(\nu_j(k_i)\right)_{i,j}\otimes I_{g_C}$---up to rationals---is $(\sqrt{\Delta_{K/\QQ}})^{g_C}$ by definition of the discriminant.    
    
    If $A$ has coefficients in $F$, to compute the period $c^+(h^1(A),1)\in (F\otimes_\QQ\CC)^\times$ as in \eqref{eqn:DelignePeriod}, one must then choose $F$-bases for the de Rham cohomology and singular homologies and compute the determinants as in \eqref{eqn:BSD_period_defn} with respect to these bases. The effect of forgetting this additional structure is to take the norm map $F\otimes_\QQ\CC\to \CC$, in which case our periods and $L$-values are multiplied together and we recover the prediction \eqref{eqn:BSD_period} from Conjecture \ref{conj:Deligne}.
\end{example}

We now return to the context of motivic pieces of curves. Given $C/K$, $G$, $\tau$ as usual, write $\hat{\tau}$ for the rational representation $\bigoplus_{\sigma\in\Gal(\QQ(\tau)/\QQ)}\sigma(\tau)$. Recall from Remark \ref{rmk:l_func_decomp} that the piece $C^{\hat{\tau}}$ corresponds to the motive of an abelian variety, $A_{\hat{\tau}}$, a quotient of $\Jac(C)$. This abelian variety has coefficients in $\QQ(\tau)$, with the corresponding $L$-values being associated to the pieces $C^\tau$ under different embeddings $\QQ(\tau)\hookrightarrow\CC$ or, equivalently, the Galois conjugate pieces under a fixed embedding. Applying Conjecture \ref{conj:Deligne} to $A_{\hat\tau}$ then gives a prediction for the $L$-values $L(C^{\sigma(\tau)},1)$. 

Because the $\QQ(\tau)$-coefficients are coming from the $G$-action, this is equivalent to taking the determinant of $\tau$-part of the period map from $H^0(\Res_{K/\QQ}C,\Omega^1_{\Res_{K/\QQ}C})$ to the dual of $H_1(\Res_{K/\QQ}C(\RR),\ZZ)$, with different embeddings $\QQ(\tau)\hookrightarrow\CC$ corresponding to the periods of Galois-conjugate pieces.

\subsection{Superelliptic curves}\label{sec:super_periods}
Molin and Neurohr explicitly describe methods to compute period matrices associated to superelliptic curves over $\CC$ to high precision in \cite{SuperellipticPeriods}, with implementations of their work available at \cite{SuperellipticPeriodsCode}. The methods they use do not give the full $2g\times2g$-matrices as in the complex case of \eqref{eqn:BSD_period_defn}, but rather the $g\times 2g$-matrix
\begin{equation}\label{eqn:partial_mat}
    P_{\mathcal{W},\Gamma} = \left(\int_\gamma\omega\right)_{\omega\in\mathcal{W},\gamma\in \Gamma}
\end{equation}
for specific choices $\mathcal{W}$ and $\Gamma$ of bases of $H^0(C,\Omega^1_C)$ and $H_1(C,\ZZ)$, respectively. We explain how to extract periods of motivic pieces of superelliptic curves from their work.

Firstly, we fix a superelliptic curve $C/K: y^m = f(x)$ and an embedding $\sigma: K\hookrightarrow\CC$, where $f$ is separable and $K$ is a number field containing the $m$-th roots of unity. Write $C_\sigma$ for the complex curve corresponding to this embedding, and take $n=\deg f$, $\delta=\gcd(m,n)$. We consider the group of automorphisms $G\cong C_m$ generated by $\alpha: y\mapsto \zeta_m\cdot y$.

\begin{proposition}[{= \cite[Proposition 3.8]{SuperellipticPeriods}}]\label{prop:holom_diffs}
The set 
\begin{equation}\label{eqn:holom_diffs}
    \mathcal{W}=\left\{ \omega_{i,j}:=\frac{x^{i-1}\emph{d}x}{y^j} \mid 1\le i\le n-1, \hspace{0.2cm} 1\le j \le m-1,\hspace{0.2cm} -mi+jn-\delta\ge 0 \right\}
\end{equation}
forms a basis for $H^0(C,\Omega^1_{C})$.
\end{proposition}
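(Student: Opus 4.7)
The plan is to verify directly that each $\omega_{i,j}$ in $\mathcal{W}$ extends to a global holomorphic differential on $C$, to note that these differentials are linearly independent in $K(C)$, and to check that $|\mathcal{W}|$ equals the genus $g$ of $C$. Linear independence is immediate because the $\omega_{i,j}$ have distinct monomial parts $x^{i-1}/y^j$ in the function field. The genus can be read off from Riemann--Hurwitz applied to the $x$-projection $\pi\colon C \to \mathbb{P}^1$, which is of degree $m$, is totally ramified above each of the $n$ zeros of $f$, and has $\delta$ preimages of ramification index $m/\delta$ above $\infty$, giving
\[ g = \tfrac{1}{2}\bigl((m-1)(n-1) - (\delta - 1)\bigr). \]

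The first substantive step is to compute $\ord_P(\omega_{i,j})$ at every place $P$ of $K(C)$. Away from the branch locus of $\pi$ nothing has a pole, so only the ramification points $P_\alpha$ above the roots $\alpha$ of $f$ and the points $Q_1, \dots, Q_\delta$ above $\infty$ are relevant. At $P_\alpha$, I would take $y$ as local uniformizer: the relation $x - \alpha = y^m/g(x)$ with $g(\alpha)\ne 0$ yields $\ord_{P_\alpha}(dx) = m-1$, whence $\ord_{P_\alpha}(\omega_{i,j}) = m - 1 - j \ge 0$ for all $j \le m-1$. At $Q_k$, the relations $\ord_{Q_k}(x) = -m/\delta$ and $\ord_{Q_k}(y) = -n/\delta$ follow directly from $y^m = f(x)$, while Riemann--Hurwitz (applied to the pullback of $\div_{\mathbb{P}^1}(dx) = -2\,\infty$) gives $\ord_{Q_k}(dx) = -m/\delta - 1$. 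Summing contributions,
\[ \ord_{Q_k}(\omega_{i,j}) = \frac{jn - mi}{\delta} - 1, \]
which is non-negative precisely when $jn - mi \ge \delta$. This shows $\mathcal{W} \subset H^0(C,\Omega^1_C)$.

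To finish, I would count $|\mathcal{W}|$ combinatorially. Since $\delta$ divides both $m$ and $n$, the quantity $jn - mi$ is always a multiple of $\delta$, so the condition $jn - mi \ge \delta$ is equivalent to $jn - mi > 0$. Consider the involution $\iota \colon (i,j) \mapsto (n-i, m-j)$ on $\{1,\ldots,n-1\}\times\{1,\ldots,m-1\}$, which sends $jn - mi$ to $-(jn - mi)$. One checks that the pairs with $jn = mi$ are precisely $(k n/\delta,\, k m/\delta)$ for $k = 1, \ldots, \delta - 1$. The remaining $(m-1)(n-1) - (\delta - 1)$ pairs split into two-element $\iota$-orbits, exactly one member of each satisfying $jn - mi > 0$; this gives $|\mathcal{W}| = \tfrac{1}{2}((m-1)(n-1) - (\delta - 1)) = g$, and combined with the holomorphicity and linear independence this forces $\mathcal{W}$ to be a basis.

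The only delicate point is the order computation at infinity. One must either invoke Riemann--Hurwitz to handle the ramification contribution to $\div(dx)$, or explicitly construct a uniformizer at each $Q_k$ from a suitable power of $y/x^{n/\delta}$ (after distinguishing the $\delta$ values this ratio takes at the points over $\infty$); either way, once $\ord_{Q_k}(dx) = -m/\delta - 1$ is pinned down, the remainder of the argument is bookkeeping.
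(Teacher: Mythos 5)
Your proof is correct; the paper states this proposition only by citation to Molin--Neurohr, and your divisor-theoretic argument---holomorphy checked at the $n$ totally ramified points above the roots of $f$ (where $\ord(\omega_{i,j})=m-1-j\ge0$) and at the $\delta$ points over $\infty$ (where $\ord(\omega_{i,j})=(jn-mi)/\delta-1$), followed by the count $|\mathcal{W}|=\tfrac{1}{2}\bigl((m-1)(n-1)-(\delta-1)\bigr)=g$ via the involution $(i,j)\mapsto(n-i,m-j)$---is exactly the standard argument underlying their Proposition 3.8, with all the order computations and the Riemann--Hurwitz genus count checking out. The only phrase worth tightening is the linear-independence step: ``distinct monomial parts'' is not in itself a reason (distinct monomials can coincide in $K(C)$), but since $1,y^{-1},\dots,y^{-(m-1)}$ are linearly independent over $K(x)$ because $[K(C):K(x)]=m$, and the coefficients $x^{i-1}$ lie in $K(x)$, the conclusion is immediate.
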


We will write $\mathcal{W}_\sigma=\{\sigma(\omega)\mid\omega\in\mathcal{W}\}$. The action of $G$ on $H^0(C_\sigma,\Omega^1_{C_\sigma})$ is thus clear; $\alpha$ acts on $\omega_{i,j}$ by $\sigma(\zeta_m)^{-j}$ and $\mathcal{W}_\sigma$ is already an eigenbasis for $\alpha$. The story for singular homology is less straightforward. To construct a basis for $H_1(C_\sigma,\ZZ)$, for each pair $(a,b)$ of branch points of the rational map $x$, a branch $y_{a,b}$ of $C$ is defined (see \cite[p.\ 6]{SuperellipticPeriods} for details) and hence oriented paths
\begin{equation}\label{eqn:branches}
\gamma_{[a,b]}^{(\ell)}=\{(x,\sigma(\zeta_m)^{\ell}\cdot y_{a,b}(x)) \mid x\in [a,b]\},\hspace{0.5cm} \ell\in\ZZ/m\ZZ
\end{equation}
are obtained. Here $[a,b]$ is the oriented line segment joining $a$ and $b$ in the complex plane. We obtain loops
\[ \gamma_{a,b}^{(\ell)} = \gamma_{[a,b]}^{(\ell)} \cup \gamma_{[b,a]}^{(\ell)} \]
for each pair of branch points. Write $X$ for the complete graph whose vertices are branch points of $x$, and $E$ for the edges of any choice of spanning tree of $X$. Writing $\gamma_{e}^{(\ell)}$ for $\gamma_{a,b}^{(\ell)}$, where $e$ is the edge between vertices $a,b\in X$, we are now able to describe generators for $H_1(C_\sigma,\ZZ)$.

\begin{theorem}[{= \cite[Theorem 3.6]{SuperellipticPeriods}}]\label{thm:loops}
    The set
    \begin{equation}\label{eqn:loops}
        \Gamma_\sigma = \{\gamma_e^{(\ell)} \mid 0\le \ell < m-1, e \in E\}
    \end{equation}
    forms a generating set for $H_1(C_\sigma,\ZZ)$.
\end{theorem}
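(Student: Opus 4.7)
The plan is to exploit the cyclic cover structure $x\colon C_\sigma \to \Proj^1$ of degree $m$, with deck group $G=\langle\alpha\rangle$, and to extract $H_1(C_\sigma,\ZZ)$ from a cellular decomposition compatible with $x$. Write $B\subset\Proj^1$ for the branch locus (the roots of $\sigma(f)$, plus $\infty$ when $m\nmid n$), $\pi\colon C_\sigma\to\Proj^1$ for the map, and set $C_\sigma^\circ := C_\sigma\setminus\pi^{-1}(B)$ and $\Proj^{1,\circ}:=\Proj^1\setminus B$, so that $\pi|_{C_\sigma^\circ}$ is an unramified cyclic $m$-cover.

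First I would give $\Proj^{1,\circ}$ a CW structure with $X$ as its $0$-skeleton, with $1$-skeleton refining the complete graph on $X$ (hence containing the tree $E$), and with $2$-cells filling the resulting faces of the planar diagram. Since $E$ is contractible, $\Proj^{1,\circ}$ deformation retracts onto a wedge of loops—essentially small monodromy circles around each element of $B$. Lifting this CW structure through $\pi|_{C_\sigma^\circ}$ places, over each edge $[a,b]\in E$, exactly $m$ lifted $1$-cells, namely the arcs $\gamma_{[a,b]}^{(\ell)}$ of \eqref{eqn:branches}. I would verify that each $\gamma_e^{(\ell)}$ is genuinely a closed loop on $C_\sigma$: since $f(a)=f(b)=0$, the points $(a,\sigma(\zeta_m)^\ell y_{a,b}(a))$ and $(b,\sigma(\zeta_m)^\ell y_{a,b}(b))$ coincide with the (stabilised) ramification points regardless of $\ell$, so $\gamma_{[a,b]}^{(\ell)}$ and $\gamma_{[b,a]}^{(\ell)}$ share endpoints.

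Next I would compute $H_1(C_\sigma,\ZZ)$ via the long exact sequence of the pair $(C_\sigma,C_\sigma^\circ)$. By excision, $H_2(C_\sigma,C_\sigma^\circ)$ is free on small disks around each point of $\pi^{-1}(B)$ and $H_1(C_\sigma,C_\sigma^\circ)=0$, so $H_1(C_\sigma)$ is $H_1(C_\sigma^\circ)$ modulo the classes of small monodromy loops around ramification points. A cellular computation on the lifted complex shows that $H_1(C_\sigma^\circ)$ is generated by the $\gamma_e^{(\ell)}$ for $\ell=0,\dots,m-1$ together with those small monodromy loops. The relation $\sum_{\ell=0}^{m-1}\gamma_e^{(\ell)}=0$ holds in $H_1(C_\sigma)$ because this sum projects to $m$ times the oriented edge $[a,b]$ minus its reverse, which is nullhomologous via the ramification disks above $a,b$; this lets us drop the $\ell=m-1$ translates, leaving precisely the set $\Gamma_\sigma$.

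The hard part is the local analysis at branch points, particularly at $\infty$ when $m\nmid n$, where the fibre has $\delta$ points of ramification index $m/\delta$ and the monodromy interacts nontrivially with the tree-edge loops. One must verify that every small monodromy loop around any such ramification point is expressible as a $\ZZ$-linear combination of the $\gamma_e^{(\ell)}$ modulo the ramification $2$-cells, so that nothing outside $\Gamma_\sigma$ is needed. A useful consistency check is that $|\Gamma_\sigma|=(m-1)(n-1)$ exceeds $2g=(m-1)(n-1)-\delta+1$ by exactly $\delta-1$, matching the expected number of independent monodromy relations—in particular, $\Gamma_\sigma$ is a basis precisely when $\delta=1$, and acquires redundancy as $\delta$ grows.
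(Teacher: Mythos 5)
First, a point of reference: the paper does not prove this statement --- it is imported verbatim from \cite[Theorem 3.6]{SuperellipticPeriods} --- so your proposal must be measured against Molin--Neurohr's proof, which does proceed by essentially the strategy you outline (pass to the unramified $\ZZ/m\ZZ$-cover of $\Proj^1$ minus the branch locus, generate $H_1$ of the open curve by lifts of spanning-tree edges together with puncture loops, fill in the punctures, and use the relation $\sum_{\ell=0}^{m-1}\gamma_e^{(\ell)}=0$ to discard one value of $\ell$). Your framing, the Riemann--Hurwitz check $2g=(m-1)(n-1)-\delta+1$, and the observation that $\Gamma_\sigma$ is a basis exactly when $\delta=1$ are all correct.

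There is, however, a genuine gap at the centre of the argument: the sentence ``a cellular computation on the lifted complex shows that $H_1(C_\sigma^\circ)$ is generated by the $\gamma_e^{(\ell)}$ together with those small monodromy loops'' is precisely the content of the theorem, and you assert it rather than prove it. One must actually express every class in $H_1(C_\sigma^\circ)$ --- for instance the Reidemeister--Schreier generators of $\ker(\pi_1(\Proj^{1,\circ})\to\ZZ/m\ZZ)$, or the lifted loops attached to the non-tree edges of the complete graph --- in terms of the $\gamma_e^{(\ell)}$ and the puncture loops; this is where the spanning-tree hypothesis and the fact that the monodromy around each finite branch point generates $\ZZ/m\ZZ$ (so that the cover is connected and the sheets over adjacent vertices can be matched up) actually enter, and none of that appears in your write-up. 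Relatedly, the ``hard part'' you flag at the end is not the real difficulty: once the punctures are filled in, every small loop around a preimage of a branch point --- including each of the $\delta$ points over $\infty$ --- bounds its own $2$-cell and so dies automatically in $H_1(C_\sigma)$; there is nothing to express in terms of the $\gamma_e^{(\ell)}$. Two smaller quibbles: the branch points cannot be the $0$-skeleton of $\Proj^{1,\circ}$, since they are removed from it, and the complete graph on $X$ need not embed in the plane, so the CW structure has to be set up more carefully (e.g.\ via the pair $(\Proj^1,B)$, or by basing all loops at an auxiliary point as in \cite{SuperellipticPeriods}).
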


\begin{remark}
    In general, $\Gamma_\sigma$ is not a basis for $H_1(C_\sigma,\ZZ)$, although this is the case when $\delta=1$.
\end{remark}

\begin{remark}\label{rmk:loop_rel}
    The reason for the strict inequality in \eqref{eqn:loops} is that $\prod_{\ell=0}^{m-1}\gamma_{e}^{(\ell)}=1$ in $H_1(C_\sigma,\ZZ)=\pi_1(C_\sigma)^{\text{ab}}$.
\end{remark}

Molin and Neurohr then describe and implement a method for computing $\int_\gamma\omega$ for $\omega\in\mathcal{W}_\sigma$ and $\gamma\in\Gamma_\sigma$. We explain how to use their implementation to compute Deligne periods in this setting, writing $R=\Res_{\QQ(\zeta_m)/\QQ}C$. 

Firstly, note that the graph $X$ as above exists independently of any embedding $\QQ(\zeta_m)\hookrightarrow\CC$ and select a set $E$ of edges of a spanning tree. For each such embedding $\sigma\in\Sigma_{\QQ(\zeta_m)}$, we see from \eqref{eqn:branches} that $\Gamma_{\sigma,E} := \{\gamma_e^{(0)} \mid e \in E\}$ generates $H_1(C_\sigma,\QQ)$ as a $\QQ(\zeta_m)$-vector space. We will write $\Gamma'_\sigma\subseteq \Gamma_{\sigma,E}$ for a $\QQ(\zeta_m)$-basis of $H^1(C_\sigma,\QQ)$. Taking the restriction of scalars and writing $\Sigma_{\QQ(\zeta_m)}^+$ for a set of coset representatives of $\Sigma_{\QQ(\zeta_m)}$ modulo complex conjugation---corresponding to infinite places of $\QQ(\zeta_m)$---we see that 
\[\cup_{\sigma\in\Sigma_{\QQ(\zeta_m)}^+}\{\gamma \mid \gamma\in\Gamma'_\sigma\}\] is a $\QQ(\zeta_m)$-basis for $H_1(R(\RR),\QQ)$ after identifying with $\prod_{\sigma\in\Sigma_{\QQ(\zeta_m)}^+}H_1(C_\sigma(\CC),\QQ)$. We note also that $H^0(R,\Omega^1_R)$ has $\cup_{\sigma\in\Sigma_{\QQ(\zeta_m)}}\mathcal{W}_\sigma$ as a $\QQ(\zeta_m)$-basis. Therefore, we wish to compute the determinant of the period map
\[ \text{diag}\left( \left(\int_{\gamma} \omega \right)_{\omega\in \mathcal{W}_\sigma\cup\bar{\mathcal{W}}_{\sigma}, \gamma\in \Gamma'_\sigma}\right)_{\sigma\in\Sigma_{\QQ(\zeta_m)}^+}. \]
To take the determinant of the $\tau$-part of this map, write $\mathcal{W}_{\sigma,\tau}$ for $\{\omega \mid \omega\in\mathcal{W}_\sigma\cup\bar{\mathcal{W}}_\sigma, \alpha(\omega)/\omega=\tau(\alpha)\}$, $\pi_\tau$ for the $\tau$-projector in $\CC[G]$, and $\Gamma'_{\sigma,\tau}\subseteq \Gamma'_\sigma$ for a subset such that $\{\pi_\tau(\gamma)\mid\gamma\in\Gamma'_{\sigma,\tau}\}$ generates the $\tau$-component of $H_1(C_\sigma,\QQ)$. We need to compute the determinants of the matrices
\[ \left(\int_{\pi_\tau(\gamma)}\omega \right)_{\omega\in\mathcal{W}_{\sigma,\tau},\gamma\in\Gamma'_{\sigma,\tau}} = \left(\int_{\gamma}\omega \right)_{\omega\in\mathcal{W}_{\sigma,\tau},\gamma\in\Gamma'_{\sigma,\tau}} \]
as $\sigma$ varies over $\Sigma_{\QQ(\zeta_m)}^+$, where equality holds here because the comparison isomorphism is $G$-equivariant. We are viewing $\gamma\in\Gamma'_{\sigma,\tau}$ as an element of $H_1(R(\RR),\ZZ)$, on which complex conjugation acts trivially. In this context, we may replace $\int_\gamma\bar\omega$ by $\bar{\int_\gamma\omega}$. We therefore compute our Deligne periods as
\begin{equation}\label{eqn:piece_period}
   \Omega_{C^{\sigma(\tau)}} = \prod_{\sigma\in\Sigma_{\QQ(\zeta_m)}^+}\det
\begin{pmatrix}
P_{\sigma,\tau} \\
\hline \vspace{-0.4cm}\\ 
\bar{P_{\bar\sigma,\tau}}
\end{pmatrix}, \hspace{0.2cm} \text{ where }\hspace{0.2cm} P_{\sigma,\tau} = \left(\int_\gamma \omega\right)_{\omega\in\mathcal{W}_\sigma\cap\mathcal{W}_{\sigma,\tau},\gamma\in\Gamma'_{\sigma,\tau}}.
\end{equation}

\begin{remark}
    Note that there is no discriminant term appearing here as in Example \ref{ex:AV_periods}. One could state Conjecture \ref{conj:super_deligne} for superelliptic curves over finite extensions $K$ of $\QQ(\zeta_m)$, in which case an analogous term involving the relative discriminant $\Delta_{K/\QQ(\zeta_m)}$ would appear. In practice, working over proper extensions of $\QQ(\zeta_m)$ is computationally out-of-reach, so we are content with this simpler formulation. 
\end{remark}

Finally, Conjecture \ref{conj:Deligne} predicts:
\begin{conj}\label{conj:super_deligne}
    Let $C/\QQ(\zeta_m): y^m = f(x)$ be as above, choose $\tau$ a character of $C_m$. Fix an embedding $\QQ(\zeta_m)\hookrightarrow\CC$. There exists $\mathcal{L}(C^\tau)\in\QQ(\zeta_m)$ such that, for $\sigma\in\Gal(\QQ(\tau)/\QQ)$,
    \begin{equation}\label{eqn:super_deligne}
        L(C^{\sigma(\tau)},1) = \Omega_{C^{\sigma(\tau)}}\cdot \sigma(\mathcal{L}(C^\tau)).
    \end{equation}
\end{conj}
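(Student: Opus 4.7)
The plan is to derive Conjecture \ref{conj:super_deligne} as a specialisation of Deligne's Period Conjecture (Conjecture \ref{conj:Deligne}) applied to the $\QQ$-motive $M := \Res_{\QQ(\zeta_m)/\QQ} h^1(C^{\hat\tau})$, which carries $\QQ(\tau) = \QQ(\zeta_m)$-coefficients and whose associated tuple of $L$-functions, in the sense of Definition \ref{defn:motLfn} indexed by $\sigma \in \Sigma_{\QQ(\tau)}$, is precisely $(L(C^{\sigma(\tau)},s))_\sigma$ by Remark \ref{rmk:l_func_decomp} and the invariance of $L$-functions under Weil restriction.

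First, I would verify that $s_0 = 1$ is a critical value for $M$. This is immediate from the explicit form of $L_\infty(C^\tau,s)$ in \eqref{eqn:GammaFactor}: the poles of $\Gamma(s/2)\Gamma((s+1)/2)$ lie only at non-positive integers, so neither $L_\infty(C^\tau,s)$ nor---via the putative functional equation \eqref{eqn:func}---$L_\infty(C^{\tau^*},2-s)$ has a pole at $s = 1$. Granting the analytic continuation of Conjecture \ref{conj:HasseWeil}, this places us in the domain of Conjecture \ref{conj:Deligne}.

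Second, I would match the abstract Deligne period $c^+(M,1)$ with the explicit formula \eqref{eqn:piece_period}. Following the template worked out in Example \ref{ex:AV_periods}, $c^+(M,1)$ is a determinant of the $\tau$-part of the comparison isomorphism from $H^+_B(M) \otimes \CC$ to $H^+_{\dR}(M) \otimes \CC \cong H^0(R,\Omega^1_R) \otimes \CC$, where $R := \Res_{\QQ(\zeta_m)/\QQ}C$. Using the standard identifications
\[ H_1(R(\RR),\QQ) \cong \prod_{\sigma \in \Sigma_{\QQ(\zeta_m)}^+} H_1(C_\sigma(\CC),\QQ), \quad H^0(R,\Omega^1_R) \otimes \CC \cong \bigoplus_{\sigma \in \Sigma_{\QQ(\zeta_m)}} H^0(C_\sigma,\Omega^1_{C_\sigma}), \]
one reduces to a product of determinants indexed by the infinite places of $\QQ(\zeta_m)$. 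The differentials $\mathcal{W}_{\sigma,\tau}$ of Proposition \ref{prop:holom_diffs} already furnish $\tau$-eigenbases for the $C_m$-action, while the $\tau$-component of singular homology is spanned by the images under $\pi_\tau$ of the subset $\Gamma'_{\sigma,\tau}$ extracted from the generators of Theorem \ref{thm:loops}. Computing determinants against these bases reproduces \eqref{eqn:piece_period} modulo the $\QQ(\tau)^\times$-ambiguity inherent in $c^+$, which is absorbed into $\mathcal{L}(C^\tau)$. The Galois-conjugate form of \eqref{eqn:super_deligne} then follows from the tuple statement of Conjecture \ref{conj:Deligne}: a single $\mathcal{L}(M,1) \in \QQ(\tau)$ governs every embedding, so one sets $\mathcal{L}(C^\tau) := \mathcal{L}(M,1)$.

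The main obstacle lies in step two when $\delta := \gcd(m,n) > 1$: the loops of Theorem \ref{thm:loops} then only generate---rather than form a basis of---$H_1(C_\sigma,\QQ)$ (cf.\ Remark \ref{rmk:loop_rel}), so one must verify for each character $\tau$ that the chosen $\Gamma'_{\sigma,\tau}$ descends to a genuine $\QQ(\zeta_m)$-basis of the $\tau$-isotypic component modulo the single relation $\prod_{\ell=0}^{m-1} \gamma_e^{(\ell)} = 1$. A secondary subtlety is that \eqref{eqn:piece_period} contains no discriminant term, constraining the clean formulation to curves defined over $\QQ(\zeta_m)$ itself; extending to finite extensions $K/\QQ(\zeta_m)$ would force us to reinstate the $\Delta_{K/\QQ(\zeta_m)}$-contribution encountered in Example \ref{ex:AV_periods}. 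These pieces of careful bookkeeping---combined with the fact that Conjecture \ref{conj:Deligne} itself is only conjectural---are why numerical verification, rather than a formal derivation, is the honest goal pursued in the remainder of the paper.
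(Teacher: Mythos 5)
Your proposal matches the paper's own treatment: the paper does not prove this statement but derives it, exactly as you do, by specialising Deligne's Period Conjecture (Conjecture \ref{conj:Deligne}) to the motive attached to $C^{\hat\tau}$ via restriction of scalars, with the Deligne period made explicit as \eqref{eqn:piece_period} using the bases of Proposition \ref{prop:holom_diffs} and Theorem \ref{thm:loops}, and with the Galois-equivariance coming from the single $\mathcal{L}(M,1)\in\QQ(\tau)$ in the tuple form of Deligne's conjecture. Your closing observation---that the statement remains a conjecture because Conjecture \ref{conj:Deligne} is itself conjectural, so only a reformulation plus numerical verification is possible---is precisely the paper's stance.
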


In the sequel we combine \cite{SuperellipticPeriodsCode} with our implementations of Algorithm \ref{algo} to numerically verify Conjecture \ref{conj:super_deligne} for the motivic pieces of several superelliptic curves over $\QQ(\zeta_3)$ and $\QQ(i)$.

\section{Computations}\label{sec:examples}
\subsection{Exponent three}\label{sec:exp_3}
Here we present some computations of $L$-functions, $L$-values and periods for superelliptic curves $C/\QQ(\zeta_3): y^3 = f(x)$ with $\deg f = 4$ and $f$ is separable. Such curves have genus $3$ and are known as Picard curves.

\begin{example}\label{ex:pic1}
    Consider the Picard curve
    \begin{equation}
        C/\QQ(\zeta_3): \hspace{0.2cm} y^3 = x^4+x^3+\zeta_3x^2+\zeta_3-2
    \end{equation}
    and write $\tau$ for a non-trivial character of $C_3$. We use an implementation of Algorithm \ref{algo} (see \cite[{\texttt{Superelliptic\_Exp3.m}}]{MotivicPiecesCode}) to compute Dirichlet series coefficients $L(C^\tau,s)$, with the caveat being that we guess the correct Euler factor at the unique prime $\pi$ above $3$ and the conductor exponent at $3$ from the conjectural functional equation. To guess the conductor, we assume that the bound
    \begin{equation}\label{eqn:CondBound}
        v_\pi(\mathcal{N}(C)) \le v_\pi(\Delta)
    \end{equation}
    is satisfied, thereby reducing to a finite check. Note that \eqref{eqn:CondBound} is not known to hold in this setting, but is generally expected to. 

    Taking $L_\pi(C^\tau,s)=1$ and $v_\pi(N(C^\tau))=12$, we find that $L(C^\tau,s)$ seems to satisfy its functional equation---the output of Magma's \texttt{CFENew}, computed to precision $15$, is less than $10^{-11}$ given $100,000$ Dirichlet series coefficients. Although we are primarily concerned with computing the $L$-functions of the pieces, we note that Magma estimates that a number of coefficients of the order $10^{9}$ would be needed to na{\"i}vely verify the Hasse--Weil Conjecture for $C$ to this same precision.

    We compute $L(C^\tau,1)\approx 1.84609002978320\hdots - i\cdot 1.54470301185825\hdots \ne0$, making this a candidate for non-trivial numerical verification of Conjecture \ref{conj:super_deligne}. We obtain the value $\Omega_{C^\tau}= -38.6473408677\hdots - i\cdot105.727015294\hdots$ as in \eqref{eqn:piece_period} via the method described in \S\ref{sec:super_periods}, and compute
    \begin{equation}\label{eqn:LValue1}
        \frac{L(C^\tau,1)}{\Omega_{C^\tau}}\approx -0.0185185185185\hdots + i\cdot0.0106916716517\hdots \approx \frac{\zeta_3-1}{81}.
    \end{equation}
    The error in the latter approximation is approximately $10^{-13}$---less than $81^{-6}$ and so vastly smaller than one would expect from an approximation with denominator of this size, and roughly to the same precision to which we computed the $L$-function.

    Because Galois acts as complex multiplication on $\QQ(\zeta_3)$, which respects the functional equation \eqref{eqn:func2}, we have both $L(C^{\bar\tau},1)=\bar{L(C^\tau,1)}$ and $\Omega_{C^{\bar\tau}}=\bar{\Omega_{C^\tau}}$. Hence the Galois equivariance part of Conjecture \ref{conj:super_deligne} would automatically follow if \eqref{eqn:LValue1} were a precise equality.
\end{example}

\begin{remark}
    The algorithm described in \cite{Lupoian2Tors} allows one to provably compute the conductor exponent at $3$, so in principle one needn't guess the conductor here. In practice this is implemented only for curves defined over $\QQ$, making guess-work the easier option, although there is no theoretical obstacle to extending the implementation to number fields.
\end{remark}

\begin{example}\label{ex:pic2}
    Now consider the curve
    \begin{equation}
        C/\QQ(\zeta_3): \hspace{0.2cm} y^3 = x^4- 3x^3+(1+\zeta_3)x.
    \end{equation}
    Using the same methods and notation as in Example \ref{ex:pic1}, we numerically verify the functional equation for $L(C^\tau,s)$ to precision $10^{-11}$ using approximately $360,000$ Dirichlet series coefficients, with $L_\pi(C^\tau,s)=1$ and $v_\pi(N(C^\tau))=18$. As before, we compute $L(C^\tau,1)\approx 2.29230558647\hdots + i\cdot0.0927222735424\hdots$ and $\Omega_{C^\tau}=-99.3426636471\hdots - i\cdot157.045532482\hdots$, giving
    \begin{equation}\label{eqn:LValue}
        \frac{L(C^\tau,1)}{\Omega_{C^\tau}}\approx -0.00617283950616\hdots + 0.0106916716517\hdots \approx \frac{\zeta_3}{81},
    \end{equation}
    where the error in the latter approximation is again smaller than the precision to which our $L$-function was computed.
\end{example}

\begin{remark}
    In both examples we have taken Picard curves $C: y^3 = f(x)$ where the coefficients of $f$ generate $\QQ(\zeta_3)$. If we were to take $C/\QQ(\zeta_3)$ to be the base change of a curve defined over $\QQ$, then we would simply find $L(C^\tau_{\QQ(\zeta_3)},s)=L(C,s)$.
\end{remark}

\subsection{Exponent four}\label{sec:exp_4}
We consider the family of genus $3$ curves of the form $C/\QQ(i): y^4 = f(x)$, where $\deg f=3$. Here, the situation is somewhat kinder computationally than the case of Picard curves over $\QQ(\zeta_3)$: fixing $\tau$ to be one of the two complex characters of $C_4$, we find that $C^\tau$ has dimension $2$ because there is a $2$-dimensional piece corresponding to the elliptic curve which arises as the quotient of $C$ by the action of $C_2\le C_4$. Moreover, because we expect a large contribution to the conductor from primes dividing the exponent, we also typically deal with smaller conductors than in the Picard curve case. Altogether, this means we are able to generate several examples (see \cite[{\texttt{Superelliptic\_Exp4.m}}]{MotivicPiecesCode}).

\renewcommand{\arraystretch}{1.5}
\begin{table}[ht]
    \centering
    \begin{tabular}{|c|c|c|c|c|}
        \hline
        $f(x)$ & $N(C^\tau)$ & $160\cdot L(C^\tau,1)/\Omega_{C^\tau}\approx$ \\
        \hline
        $x^3+ix^2-(1-i)x$ & $2^{17}\cdot 5$ & $3-i$ \\
        \hline
        $x^3 + (1+2i)x^2 - 2x$ & $2^{14}\cdot41$ & $2+4i$ \\
        \hline
        $x^3+ix$ & $2^{20}$ & $1-2i$ \\
        \hline
        $x^3-x^2+ix$ & $2^{16}\cdot 17$ & $2+i$ \\
        \hline
        $x^3 + (3+2i)x^2 + 2x$ & $2^{14}\cdot3^2\cdot17$ & $-2-6i$ \\
        \hline
        $x^3-3ix^2-(3-i)x$ & $2^{17}\cdot 5^2$ & $-5i$ \\  
        \hline
        $x^3 -(1+i)x^2 - 4i$ & $2^{16}\cdot5\cdot13$ & $10+10i$ \\
        \hline
        $x^3+(2+i)x^2+2$ & $2^{16}\cdot5\cdot17$ & $10$ \\
        \hline
        $x^3 -(3+i)x + (2+i)$ & $2^{16}\cdot97$ & $-2+i$ \\
        \hline
        $x^3+(4+2i)x^2-2x$ & $2^{18}\cdot 41$ & $4+8i$ \\
        \hline
        $x^3 + (-3+7i)x^2 + (1-i)x$ & $2^{19}\cdot5\cdot13$ & $-4+2i$ \\
        \hline
        $x^3 -(1-i)x^2-4i$ & $2^{16}\cdot17\cdot37$ & $8-16i$ \\
        \hline
    \end{tabular}
    \caption{Numerical verification of Deligne's Period Conjecture for motivic pieces of superelliptic curves $C/\QQ(i)\colon y^4 = f(x)$.}
    \label{tab:1}
\end{table}

In all of the examples presented in Table \ref{tab:1} we take $L_2(C^\tau,T)=1$. The second column displays the conductor of $\Res_{K/\QQ}C^\tau$, as in the functional equation \eqref{eqn:func2}. The final column displays a Gaussian integer whose difference from our computed value of $160 \cdot L(C^\tau,1)/\Omega_{C^\tau}$ has absolute value at most $10^{-10}$, which is less than $160^{-4}$---we ensure also that the output of Magma's \texttt{CFENew} is always less than this bound.

\begin{remark}
    The following observation allows us to obtain provably correct values for the conductor norms in Table \ref{tab:1}, assuming $L_2(C^\tau,T)=1$: the equation $D: \Delta^2 = \disc_x(y^4-f(x))$ defines a genus $3$ curve, the Jacobian of which decomposes under isogeny as a product of those of the genus $1$ curve $\Delta^2 = \disc_x(y^2-f(x))$ and the genus $2$ curve $H: \Delta^2 = y\cdot\disc_x(y^2-f(x))$. By \cite[Proposition 5.3]{WildConductors}, in combination with Lemma \ref{lem:conductors}, we deduce that the wild conductor exponent at $2$ of $C^\tau$ is half that of $H$. In turn, the latter can be computed using the results of \cite{DokDoris} alongside the associated implementation \cite{DokDorisRepo}.
\end{remark}

\begin{remark}
    Note that all of the polynomials appearing in Table \ref{tab:1} such that corresponding final column entry $a+bi$ satisfies $\gcd(2,a,b)=1$ have a rational root. This corresponds to a $4$-torsion point on $C/\QQ(i)\colon y^4=f(x)$. This seems to tentatively agree with the fact that the denominator in the BSD-quotient for $C$ is $|J_C(\QQ(i))_{\text{tors}}|^2$.
\end{remark}

\begin{remark}
    As before, the Galois equivariance part of Deligne's conjecture here would follow automatically from rationality. It would be of great interest to carry out analogous computations in the case of exponent five superelliptic curves, where there is a more interesting Galois action to verify on the $L$-values, but we are hindered by large conductor exponents at $5$ and so such examples seem to lie narrowly beyond the reach of the computational capacity available to the author.
\end{remark}

\bibliographystyle{plain}
\bibliography{main} 

@article{Magma,
title = {{The Magma Algebra System I: The User Language}},
journal = {J. Symb. Comput.},
volume = {24},
number = {3},
pages = {235-265},
year = {1997},
author = {W. Bosma and J. Cannon and C. Playoust}
}

@misc{lmfdb,
  shorthand    = {LMFDB},
  author       = {The {LMFDB Collaboration}},
  title        = {The {$L$}-functions and modular forms database},
  howpublished = {\url{https://www.lmfdb.org}},
  year = {2026}
}

@article{DGKM2,
author = {V. Dokchitser and H. Green and A. Konstantinou and A. Morgan},
title = {{Parity of ranks of Jacobians of curves}},
year = {2025},
journal = {Proc. Lond. Math. Soc.},
volume = {131},
issue = {3}
}

@article{DokCalc,
  author={T. Dokchitser},
  title={Computing special values of motivic {$L$}-functions},
  journal={Experiment. Math.},
  volume={13},
  number={2},
  pages={137--149},
  year={2004}
}

@article{Deligne79,
  title={Valeurs de fonctions {$L$} et p{\'e}riodes d’int{\'e}grales},
  author={P. Deligne},
  year={1979},
  journal ={Proc. Symp. Pure Math.},
  volume = {33},
  pages = {313--346}
}

@article{DokchitserEvansWiersema,
url = {https://doi.org/10.1515/crelle-2020-0036},
title = {{On a BSD-type formula for $L$-values of Artin twists of elliptic curves}},
author = {V. Dokchitser and R. Evans and H. Wiersema},
pages = {199--230},
volume = {2021},
number = {773},
journal = {J. Reine Angew. Math.},
doi = {doi:10.1515/crelle-2020-0036},
year = {2021}
}

@article{BurnsMC,
  author          = {D. Burns and D. Macias Castillo},
  journal         = {Memoirs of Amer. Math. Soc.},
  title           = {{On refined conjectures of Birch and Swinnerton-Dyer type for Hasse--Weil--Artin $L$-series}},
  year={2024},
  volume={297},
  number={1482}
}

@article{EvansMCW,
  author = {R. Evans and D. Macias Castillo and H. Wiersema},
  title = {{Numerical Evidence for a Refinement of Deligne's Period Conjecture for Jacobians of Curves}},
  journal = {Experiment. Math.},
  volume = {34},
  number = {3},
  pages = {366--383},
  year = {2025}
}

@article{BurnsFlach,
  author          = {D. Burns and M. Flach},
  journal         = {Doc. Math.},
  volume          = {6},
  pages           = {501--570},
  title           = {{Tamagawa Numbers for Motives with (Non-Commutative) Coefficients}},
  year            = {2001}
}

@phdthesis{EvansThesis,
  author      = {R. Evans},
  school      = {King's College, London},
  title       = {{Artin-twists of Abelian Varieties: Periods, $L$-values and Arithmetic}},
  year        = {2021}
}

@article{KWSerreConj,
 author = {Khare, C. and Wintenberger, J.-P.},
 title = {Serre's modularity conjecture ({I})},
 journal = {Invent. Math.},
 volume = {178},
 number = {3},
 year = {2009}
}

@Inbook{Ribet2004,
author="Ribet, K. A.",
title="Abelian Varieties over $\mathbb{Q}$ and Modular Forms",
bookTitle="Modular Curves and Abelian Varieties",
year="2004",
publisher="Birkh{\"a}user",
address={Basel},
pages="241--261",
series = {Progr. Math.},
volume = {224}
}

@article{Ellenberg01,
title = {Endomorphism Algebras of {Jacobians}},
journal = {Adv. Math.},
volume = {162},
number = {2},
pages = {243--271},
year = {2001},
author = {J. S. Ellenberg}
}

@article{UlmerConductors,
    author = {D. Ulmer},
    title = {Conductors of $\ell$-adic representations},
    journal = {Proc. Am. Math. Soc.},
    volume={144},
    year = {2016}
}

@article{SuperellipticPeriods,
  journal = {Math. Comput.},
  title = {Computing period matrices and the {Abel--Jacobi} map of
superelliptic curves},
  author = {P. Molin and C. Neurohr},
  volume = {88},
  number = {316},
  year = {2019},
  pages = {847--888}
}

@misc{SuperellipticPeriodsCode,
  author = {P. Molin and C. Neurohr},
  howpublished = {\url{http://doi.org/10.5281/zenodo.1098275}},
  title = {{hcperiods: Arb and Magma packages for periods of
superelliptic curves}},
  year = {2017}
}

@article{Lupoian2Tors,
author = {E. Lupoian},
title = {{Two-torsion subgroups of some modular Jacobians}},
year = {2024},
volume = {20},
number = {10},
journal = {Int. J. Number Theory}
}

@misc{WildConductors,
author = {H. Spencer},
title = {Wild conductor exponents of curves},
year = {2025},
howpublished={\href{https://hspen99.github.io/writings}{Pre-print}}
}

@article{DokDoris,
author = {Dokchitser, T. and Doris, C.},
year = {2017},
title = {3-torsion and conductor of genus 2 curves},
volume = {88},
journal = {Math. Comput.}
}

@misc{DokDorisRepo,
author = {Doris, C.},
year = {2017},
title = {{Genus2Conductor} ({G}ithub repository)},
howpublished = {\url{https://github.com/cjdoris/Genus2Conductor}}
}

@misc{MaistretShukla2025,
author = {C. Maistret and H. Shukla},
title = {{On the factorization of twisted $L$-values and $11$-descents over $C_5$-number fields}},
year = {2025},
howpublished = {arXiv: \href{https://arxiv.org/abs/2501.09515}{{2501.09515}}}
}

@Inbook{Gross1982,
author="Gross, B. H.",
title="On the Conjecture of {Birch and Swinnerton-Dyer} for Elliptic Curves with Complex Multiplication",
bookTitle="Number Theory Related to Fermat's Last Theorem",
year="1982",
publisher="Birkh{\"a}user",
address="Boston, MA",
pages="219--236",
series="Progr. Math.",
volume="26"}

@article{KaniRosen,
  author = {E. Kani and M. Rosen},
  journal = {Math. Ann.},
  number = {2},
  title = {Idempotent relations and factors of {Jacobians}},
  volume = {284},
  year = {1989}
}

@article{MilneAV,
  author = {J. S. Milne},
  journal = {Invent. Math.},
  volume = {17},
  title = {On the {Arithmetic} of {Abelian Varieties}},
  year = {1972},
  pages = {177--190}
}

@misc{MotivicPiecesCode,
author = {H. Spencer},
year = {2025},
title = {{MotivicPiecesOfCurves} ({G}ithub repository)},
howpublished = {\url{https://github.com/hspen99/MotivicPiecesOfCurves}}
}

@article{DummitQuintics,
  author={D. S. Dummit},
  title={Solving solvable quintics},
  journal={Math. Comput.},
  volume={57},
  number={195},
  year = {1991},
  pages = {387--401}
}

@misc{Mathematica,
  author = {Wolfram Research{,} Inc.},
  title = {Mathematica, {V}ersion 14.3},
  url = {https://www.wolfram.com/mathematica},
  note = {Champaign, IL, 2025}
}

@misc{QuinticsCode,
  author = {D. S. Dummit},
  howpublished = {\url{http://site.uvm.edu/ddummit/files/2021/04/quintics.nb_.txt}}
}

@article{Roth,
author = {Roth, K. F.},
title = {Rational approximations to algebraic numbers},
journal = {Mathematika},
volume = {2},
number = {1},
pages = {1--20},
year = {1955}
}

@book{KhinchinContFractions,
  author    = {Khinchin, A. Ya.},
  title     = {Continued Fractions},
  publisher = {University of Chicago Press},
  address   = {Chicago},
  year      = {1964}
  }

\end{document}